\documentclass[11pt]{amsart}

\usepackage{macros}
\usepackage[bbgreekl]{mathbbol}
\usepackage{fullpage}
\usepackage{setspace}
\usetikzlibrary{patterns}
\setlength{\parskip}{8pt}%
\setlength\parindent{0pt}

\linespread{1.25}

\setcounter{tocdepth}{2}
\numberwithin{equation}{section}

\let\emptyset\varnothing

\def\d{{\rm d}}

\def\dt{\d t}
\def\del{\partial}
\def\delbar{{\overline{\partial}}}

\def\RRge{\RR_{\geq 0}}
\def\RRgt{\RR_{> 0}}

\def\Cur{{\rm Cur}}

\def\hotimes{{\widehat{\otimes}}}

\def\coker{{\rm coker}}
\def\bu{{\bullet}}
\def\PV{{\rm PV}}

\begin{document}
\title{Factorization algebras and abelian CS/WZW-type correspondences}

\author{Owen Gwilliam}
\address{Department of Mathematics and Statistics \\
Lederle Graduate Research Tower, 1623D \\
University of Massachusetts Amherst \\
710 N. Pleasant Street}
\email{gwilliam@math.umass.edu}

\author{Eugene Rabinovich}
\address{Department of Mathematics\\
University of California, Berkeley\\
970 Evans Hall \#3840\\
Berkeley, CA 94720}
\email{erabin@math.berkeley.edu}

\author{Brian R. Williams}
\address{School of Mathematics \\ University of Edinburgh\\ Edinburgh, UK}
\email{brian.williams@ed.ac.uk}

\begin{abstract}
We develop a method of quantization for free field theories on manifolds with boundary where the bulk theory is topological in the direction normal to the boundary and a local boundary condition is imposed. 
Our approach is within the Batalin-Vilkovisky formalism.
At the level of observables, the construction produces a stratified factorization algebra that in the bulk recovers the factorization algebra developed by Costello and Gwilliam. 
The factorization algebra on the boundary stratum enjoys a perturbative bulk-boundary correspondence with this bulk factorization algebra. 
A central example is the factorization algebra version of the abelian Chern-Simons/Wess-Zumino-Witten correspondence,
but we examine higher dimensional generalizations that are related to holomorphic truncations of string theory and $M$-theory and involve intermediate Jacobians.
\end{abstract}

\maketitle
\thispagestyle{empty}
\spacing{1}
\setcounter{tocdepth}{2}
\tableofcontents

\section{Introduction}

\spacing{1.25}

Moving from the interior to the boundary of a manifold often leads to interesting, even intricate, generalizations of constructions that make sense on manifolds without boundary. Recall, for example, the generalization of Poincar\'e duality to Lefschetz duality. In physics one likewise sees that a field theory living in the interior --- the bulk --- often couples to a theory living on the boundary to produce a rich, interacting composite system. A key example for us is the Chern-Simons/Wess-Zumino-Witten (CS/WZW) correspondence, in which a topological field theory on an oriented 3-manifold $M$ interacts with a chiral conformal field theory on its boundary $\partial M$, equipped with a complex structure to make it a Riemann surface. Here, the interaction is not via a term in the action coupling the bulk and boundary theories; instead, the interaction consists of exhibiting the boundary theory (chiral WZW theory) as a boundary condition for the bulk theory (Chern-Simons).
This correspondence originated in \cite{WittenJones}, and it has subsequently witnessed a surge of holographic generalizations in the context of the AdS/CFT correspondence. 

In this paper we revisit this kind of situation using the Batalin-Vilkovisky (BV) formalism and factorization algebras. 
It is important that we restrict to field theories that behave as topological theories in the direction normal to the boundary, as captured in Definition~\ref{dfn: tnbft}. 
Our central result is that if one imposes a local boundary condition in a homologically correct way, 
then a rather naive extension of BV quantization automatically produces a bulk-boundary correspondence, including a form of the abelian CS/WZW correspondence (cf. Theorems \ref{thm: maingenlcl} and \ref{thm: maingenlq}. We work perturbatively and see only a fragment of the full story.) The mathematical formulation is that the naive BV quantization produces a factorization algebra on the manifold whose behavior in the bulk is simply the algebra of quantum operators for the bulk theory and whose behavior on a neighborhood of the boundary is simply the algebra for the local boundary condition. 
In this introduction, we state a special case of our general theorem, with hopes it helps the reader calibrate to the discussion.

\subsection{A model case of our general result}

Our focus is on the following geometric situation. 
Let $\Sigma$ be an oriented smooth 2-dimensional manifold,
which we equip with a complex structure.
Let $M$ denote the closed half-space $\RRge \times \Sigma $,
let $\mathring M$ denote the open half-space $\RRgt \times \Sigma $,
and let $\pi: M \to \Sigma$ denote the projection map.
We view $\RRge$ as providing a kind of ``time direction'' and use $t$ to denote its coordinate.
See Figure~\ref{fig:3mfldwbdry}.

\begin{figure}
\begin{tikzpicture}
\draw[semithick](-6,0) ellipse (0.8 and 2);
\draw[semithick](-6,0.3) arc (-30:30:1);
\draw[semithick](-6,1.2) arc (150:210:0.82);
\draw[semithick](-6,-1.3) arc (-30:30:1);
\draw[semithick](-6,-0.4) arc (150:210:0.82);
\draw[semithick](-2,0) ellipse (0.8 and 2);
\draw[semithick](-2,0.3) arc (-30:30:1);
\draw[semithick](-2,1.2) arc (150:210:0.82);
\draw[semithick](-2,-1.3) arc (-30:30:1);
\draw[semithick](-2,-0.4) arc (150:210:0.82);
\draw[dotted](2,0) ellipse (0.8 and 2);
\draw[dotted](2,0.3) arc (-30:30:1);
\draw[dotted](2,1.2) arc (150:210:0.82);
\draw[dotted](2,-1.3) arc (-30:30:1);
\draw[dotted](2,-0.4) arc (150:210:0.82);
\draw[semithick](-2,2) -- (3,2);
\draw[dotted](3,2) -- (4,2);
\draw[semithick](-2,-2) -- (3,-2);
\draw[dotted](3,-2) -- (4,-2);
\node at (-7.2,1){$\Sigma$};
\node at (0,-1){$\mathring{M}$};
\node at (-3.2,1){$M$};
\draw[->,semithick] (-3,0) -- (-5,0);
\node at (-4,0.25){$\pi$};
\draw[->, semithick](-1,-2.65) -- (2,-2.65);
\node at (0.5, -2.4){$t$};

\end{tikzpicture}
\caption{Projection of $M$ onto the boundary $\Sigma$}
\label{fig:3mfldwbdry}
\end{figure}
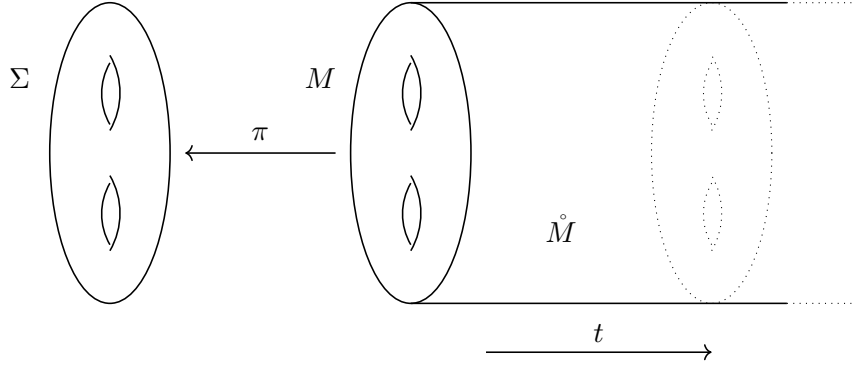

In the interior $\mathring M$, which is a manifold without boundary, 
we put (perturbative) Chern-Simons theory with gauge group~$U(1)$ with level $\kappa$.
It has a factorization algebra $\Obq_{CS}$ of quantum observables.
(See \S4.5 of~\cite{CG1}.)

On the boundary $\Sigma = \partial M$, 
there is a factorization algebra $\Cur^\q_{WZW}$ encoding the chiral $U(1)$ currents, with the Schwinger term determined by $\kappa$.
(See \S5.4 of~\cite{CG1} for its construction and verification that it recovers the standard vertex algebra and OPE.)

We construct a factorization algebra $\Obs^\q_{CS/WZW}$ for abelian Chern-Simons theory on $M$ with a particular boundary condition called the chiral WZW boundary condition. 
(As far as we are aware, 
this is the first construction of a factorization algebra of observables of a field theory arising on a manifold with boundary.)
It interpolates between the Chern-Simons observables and the chiral currents in the following precise sense.

\begin{thm}
\label{thm: main}
The factorization algebra $\Obs^\q_{CS/WZW}$ is stratified in the sense that
\begin{itemize}
\item on the interior $\mathring M$, there is a natural \emph{isomorphism} 
\[
\Obs^\q_{CS} \simeq \left(\Obs^\q_{CS/WZW}\right)\Big|_{\mathring M}
\] 
of factorization algebras, and
\item on the boundary $\partial M = \Sigma$, there is a quasi-isomorphism 
\[
\Cur^\q_{WZW} \simeq \pi_*\left(\Obs^\q_{CS/WZW}\right)
\] 
of factorization algebras. 
\end{itemize}
\end{thm}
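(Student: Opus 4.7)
I would apply the general bulk-boundary BV quantization machinery developed earlier in the paper (Theorems~\ref{thm: maingenlcl} and~\ref{thm: maingenlq}) to abelian Chern-Simons with the chiral WZW boundary condition, and then carry out the two identifications by hand.

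First I would spell out the classical data. The bulk sheaf of fields on $M$ is $\mathcal{E}(U) = \Omega^\bullet(U)[1]$ with the $(-1)$-shifted symplectic pairing $\kappa\int_M \alpha\wedge\beta$ and action $\tfrac{\kappa}{2}\int_M \alpha\wedge d\alpha$ of abelian CS at level $\kappa$. The chiral WZW boundary condition is encoded by the Lagrangian $L = \Omega^{\bullet,0}(\Sigma)[1] \subset \Omega^\bullet(\Sigma)[1]$ picked out by the complex structure on $\Sigma$; isotropy is immediate because any two forms of type $(\bullet,0)$ wedge to zero on a complex curve, and the complementary Lagrangian is $\Omega^{\bullet,1}(\Sigma)[1]$. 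One checks this fits the axiomatic framework of Definition~\ref{dfn: tnbft}, so the general result produces a stratified factorization algebra $\Obs^\q_{CS/WZW}$ on $M$.

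The bulk statement is nearly formal: for $U \subset \mathring M$ the boundary condition is vacuous, so the local fields are simply $\Omega^\bullet(U)[1]$ with their usual BV data, and the naive quantization gives back the Costello--Gwilliam factorization algebra $\Obs^\q_{CS}(U)$. Naturality in $U$ upgrades this to an isomorphism of factorization algebras on $\mathring M$.

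The main work is in the boundary pushforward. For a disc $V \subset \Sigma$ I would compute $\pi_*\Obs^\q_{CS/WZW}(V) = \Obs^\q_{CS/WZW}(\RRge \times V)$ by a homotopy reduction along the normal direction. Writing $\Omega^\bullet(\RRge \times V)[1] \cong \Omega^\bullet(\RRge) \hotimes \Omega^\bullet(V)[1]$, I would use the Poincar\'e-type contraction of $\Omega^\bullet(\RRge)$ onto $\RR$ at $t=0$, which is compatible with the chiral boundary condition. Combined with the Dolbeault splitting on $V$, the allowed half of the fields deformation-retracts onto the underlying classical complex of the chiral WZW theory on $V$.

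Finally I would transfer the BV quantization through this classical deformation retract by homological perturbation. The quantum differential on $\Obs^\q_{CS/WZW}$ is determined by the propagator for Chern-Simons on $\RRge \times V$ satisfying the chiral boundary condition; transferring it should yield precisely the Schwinger cocycle at level $\kappa$ that defines $\Cur^\q_{WZW}$. I expect this to be the principal obstacle: it requires computing the Green's function for the relevant elliptic boundary value problem on $\RRge\times V$ and identifying its leading singularity along the diagonal of $V\times V$ with the Kac-Moody central extension at level $\kappa$. Once this local matching is done, the global quasi-isomorphism $\Cur^\q_{WZW} \simeq \pi_*\Obs^\q_{CS/WZW}$ assembles from local equivalences in the usual way for factorization algebras.
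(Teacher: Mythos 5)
Your overall strategy---specialize Theorems \ref{thm: maingenlcl} and \ref{thm: maingenlq} to abelian Chern--Simons, handle the bulk restriction formally, and reduce the pushforward by contracting the normal direction---is the right one, and the bulk half of your argument is correct as stated. But there are two genuine problems with the boundary half. First, the Lagrangian you propose, $L = \Omega^{\bullet,0}(\Sigma)[1]$, is not a valid local Lagrangian boundary condition: it is isotropic and of half rank, but it is not a subcomplex of $(\Omega^\bullet(\Sigma)[1],\d_{dR})$, since $\overline{\partial}$ maps $(p,0)$-forms out of $\Omega^{\bullet,0}$, and Definition \ref{dfn: bdycondition} requires $\sL$ to be $\Qb$-stable. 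The chiral WZW boundary condition is instead $\sL = \Omega^{1,\bullet}(\Sigma)\otimes\fA$ (which is $\d$-stable because $\d\alpha = \overline{\partial}\alpha$ in bidegree $(1,\bullet)$), with complement $\sL^\perp = \Omega^{0,\bullet}(\Sigma)\otimes\fA[1]$ carrying the currents. This is not cosmetic: the decomposition $Q_\partial = Q_L + Q_{L^\perp} + Q_{rel}$ with $Q_{rel} = \partial : \Omega^{0,\bullet} \to \Omega^{1,\bullet}$ is exactly what produces the Schwinger term, and your splitting does not furnish it.

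Second, and more importantly, the step you defer as ``the principal obstacle''---computing a Green's function for the boundary value problem and reading off the Kac--Moody extension from its diagonal singularity---is both undone and not how the quantization in this framework works. The differential on $\Obq_{\sE,\sL}$ is $Q+\hbar\Delta$, where $\Delta$ is the local BV Laplacian obtained by contracting smeared observables in $\Sym(\sE_{\sL,c}[1])$ against the smooth local pairing $\ip_{loc}$; no propagator or distributional kernel enters the definition, so there is no singularity to analyze. The Schwinger term appears for an elementary reason: the classical quasi-isomorphism is the explicit cochain map $\II^{\cl}(\alpha) = \alpha\wedge\phi\,\d t - (-1)^{|\alpha|}(\Phi-1)\,Q_{rel}\alpha$ (with $\phi$ a bump function of total integral $1$ and $\Phi$ its antiderivative), and the identity $\ip\big[\II^{\cl}\alpha_1,\II^{\cl}\alpha_2\big] = \mu(\alpha_1,\alpha_2)$, where $\mu(\alpha_1,\alpha_2)=\int\kappa(\alpha_1,\partial\alpha_2)$ is precisely the level-$\kappa$ Kac--Moody cocycle, reduces to the one-line integral $\int_{\RR_{\geq 0}}\phi\cdot(\Phi-1)\,\d t$. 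Hence $\II^{\cl}$ intertwines the two BV Laplacians on the nose and immediately upgrades to the quantum quasi-isomorphism, with no homological perturbation of the quantum structure required. Until you either carry out your proposed analytic computation or replace it with an argument of this kind, the quantum boundary identification is not established.
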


This factorization algebra thus exhibits the desired phenomenon, 
as it is precisely the abelian Chern-Simons system in the ``bulk'' $\mathring M$ but becomes the chiral currents on the boundary~$\partial M$. 
The full factorization algebra $\Obq_{CS/WZW}$ contains more information still: it encodes an \emph{action} of the bulk observables $\Obq_{CS}$ on the boundary observables~$\Curq_{WZW}$.

There is a version of this theorem for the classical observables; 
it is a straightforward interpretation in the BV setting of the standard notion of a boundary condition for a partial differential equation.

A compelling phenomenon happens at the quantum level: 
the canonical BV quantization of abelian Chern-Simons theory in the bulk forces the appearance of the Kac-Moody cocycle $\int \alpha \wedge\partial \beta$ (i.e., Schwinger term) on the boundary.
We emphasize that these constructions are wholly rigorous, not requiring any leaps of physical intuition.
They also yield naturally a stratified factorization algebra,
and hence the theorem suggests that other bulk-boundary correspondences in the physics literature may also admit formulations in these terms.
We will describe a few such correspondences, notably a generalization of abelian CS/WZW to higher dimensions with a $4k+3$-dimensional bulk and a $4k+2$-dimensional boundary equipped with a complex structure.

One drawback of our work is that we only deal with perturbative and Lie algebraic statements here, 
not with nonperturbative and group-level versions, where many fascinating issues arise.
(As merely a jumping-off point and not a complete list of citations for this enormous subject,
we point to \cite{FMS, FFS, HopSing, KSonCS, BD, WittenJones,EMSS,FreedDQ, BecBenSchSza} as places where such issues are addressed.)
We expect that a rigorous extension of the BV formalism to global derived geometry would fold those nonperturbative issues together with our perturbative efforts.

\subsection{Consequences and applications}

One payoff here is a new view on Chern-Simons states in bundles of conformal blocks for chiral WZW models.
Factorization algebras, like sheaves, are local-to-global objects,
and so the homology of these stratified factorization algebras encode nontrivial global information.
Here, in particular, they automatically produce maps from the space of boundary observables into the global observables of the theory. 
As an example, we obtain the Chern-Simons states of the chiral WZW theory from studying the map from the boundary observables on a Riemann surface to the observables of a compact 3-manifold bounding that surface.

We also treat higher dimensional abelian Chern-Simons theory, which exists on manifolds of real dimension $4k+3$. 
Particularly relevant for physics is the Chern-Simons action on 7- and 11-dimensional manifolds. 
Witten \cite{WittenM5effective} has argued that the 7-dimensional abelian Chern-Simons theory is holographically dual to the abelian ``chiral'' two-form, which is a piece of the 6-dimensional $\cN=(2,0)$ superconformal theory. 
Likewise, there is a ``chiral'' four-form that appears in the Type IIB superstring, and is understood as being the holographic dual of 11-dimensional Chern-Simons theory. 
We propose an interpretation of each of these physical situations via factorization algebras that follows the outline above for the ordinary CS/WZW correspondence.
See Section~\ref{sec:physics} for this discussion.

In arbitrary real dimension $4k+3$, the higher dimensional analogs of Chern-Simons
states are sections of interesting vector bundles over the intermediate Jacobians of any complex $2k+1$-fold that admits an oriented null-cobordism.
It would be very interesting to see how a full non-perturbative formulation of our constructions relates back to quadratic refinements of intersection pairings~\cite{HopSing}. 

Another payoff, which we expect to follow from the present work, is a systematic generalization of the role played by the Poisson sigma model in controlling the deformation quantization of Poisson manifolds.
The Poisson sigma model itself (with a Poisson vector space as a target and the half-plane as a source) is an example of the bulk-boundary systems amenable to our methods. 
Here we show that,
in the guise of its stratified factorization algebra, 
we recover the Swiss cheese algebras that play a key role in deformation quantization.
In addition, we discuss a case of Koszul duality arising from a pair of transverse Lagrangians;
it intertwines deformation quantization with the general view on Koszul duality via factorization algebras.

In the near future we expect it to be possible to show that for a boundary condition that is itself topological in nature, 
the bulk factorization algebra is the derived center of the boundary factorization algebra,
just as the bulk observables of the Poisson sigma model are the Hochschild cochains of the deformation quantized algebra living on the boundary.

The setting in which we work --- a certain class of BV theories on manifolds with boundary --- was formulated at the classical level in \cite{ButsonYoo}. 
Their work focuses on a class of interacting perturbative theories, but they do not treat quantization.
The work here takes the first steps of quantization of perturbative bulk-boundary theories in the BV formalism for \emph{free} theories of the type studied in \cite{ButsonYoo}. 
In the PhD dissertation \cite{AR2}, the third named author develops quantization for such interacting theories.

As a remark for readers familiar with the BV-BFV formalism \cite{CMR1, CMR2},
we note that we explore here a less sophisticated situation than a BFV theory on the boundary. 
Here we simply impose a boundary condition --- we force the boundary values of our fields to live in a Lagrangian subspace of all possible boundary values --- rather than work with a Lagrangian foliation.
Note that on a linear symplectic space, picking a linear Lagrangian subspace and picking a linear Lagrangian foliation are in correspondence.
Hence, we hope that our methods, particularly the factorization algebra aspects, may have some role to play in the BV-BFV approach.
In particular, it would be interesting to relate our results to the perspective of \cite{MSW},
who offer a different approach to the CS/WZW correspondence.

\subsection{Outline of the paper}

Section \ref{sec: BBFTs} defines the class of bulk-boundary theories that we study in this paper.
The definition is modeled on the definition of a free BV theory in \cite{CosBook},
but we hope it is transparent to anyone already familiar with the BV formalism in some guise.
We end the section with several examples;
some readers may wish to start there.

Section \ref{sec: FAconstruction} recalls the factorization algebras that appear purely in the bulk or on the boundary,
which were constructed in \cite{CG1}, in various guises.
We then construct the natural factorization algebra for the bulk-boundary system,
modeled on those constructions.
Functional analytic subtleties are addressed in the appendix.

Section \ref{sec: main theorem} states and proves the main theorem,
both for classical and for quantum observables.
Section \ref{sec: apps} addresses specific examples of the theorems.

\subsection{Acknowledgements}

We are lucky to be part of a community bustling with ideas and generous in sharing them.
On the topics connected with this paper---such as free BV quantization, field theories on manifolds with boundary, and generalizations of familiar theories---we are grateful to 
Ben Albert, Dylan Butson, Damien Calaque, Ivan Contreras, Kevin Costello, Chris Elliott, Greg Ginot, Ryan Grady, Andre Henriques, Theo Johnson-Freyd, Si Li, Pavel Safronov, Claudia Scheimbauer, Michele Schiavina, Stephan Stolz, Peter Teichner, Alessandro Valentino, and Philsang Yoo.
We would like to thank Mich\`ele Vergne for comments on an earlier draft of this paper.
The anonymous referee caught several errors and pinpointed ambiguities;
the paper is improved by their eagle eyes. 
Joint time at the Max Planck Institute for Mathematics kickstarted our dialogue about these issues, and we are grateful for the convivial atmosphere and financial support it supplied.
The National Science Foundation supported O.G. through DMS Grant No. 1812049.
E.R. is supported by the National Science Foundation Graduate Research Fellowship Program under Grant No. DGE 1752814. 
B.W. is partially supported by the National Science Foundation Award DMS-1645877.
During the revisions of this paper in spring 2020, all three authors benefited from the hospitality of the Mathematical Sciences Research Institute in Berkeley, California;
While O.G. and B.W. were in residence at MSRI and hence received support from the NSF under Grant No. 1440140.
Any opinions, findings, and conclusions or recommendations expressed in this material are those of the authors and do not necessarily reflect the views of the National Science Foundation.

\section{Free bulk-boundary field theories}
\label{sec: BBFTs}

\subsection{Overview}
In this section, we describe what we mean by a ``free bulk-boundary  system."  
We follow the general discussion with a series of examples of interest to us. 
(The impatient reader should feel free to skip to the examples.)
We  use the Batalin-Vilkovisky (BV) formalism to articulate the relevant definitions, but before we provide full definitions, let us discuss briefly some general principles which underlie the study of field theory on manifolds with boundary. 

Let $M$ be a manifold with boundary, and $\mathring M= M\backslash \del M$ be its interior. 
In Lagrangian field theory, one often starts with a bundle $E\to \mathring M$ and an action functional $S$ that  is a function of the space of sections of $E$, for which we temporarily use the symbol $\sE$. 
The equations of motion for the field theory are the Euler-Lagrange equations for the action $S$. 
One may extend $E$ (and also $\sE$) to $M$, and the equations of motion can also be extended to $M$. 
However, these equations of motion no longer arise from the calculus of variations for $S$ considered as a function on all of $\sE$: 
the argument on $\mathring M$ uses an integration by parts, 
which produces a boundary term when the analogous calculation is carried through on $M$.
A solution to this issue is to restrict $S$ to a subspace of $\sE$ for which the boundary term vanishes. In other words, we \emph{impose boundary conditions on the fields.}
We would like our boundary conditions to be suitably \emph{local}, which means that they are specified by a sub-bundle of the bundle of normal jets $J_\nu(E)$ on $\del M$ (in more coordinate-dependent terms, the boundary conditions impose a point-by-point condition on the values and normal derivatives of sections of $E$ on $\del M$).

Our particular approach to these ideas makes use of the Batalin-Vilkovisky formalism, which is a natural method for encoding the equations of motion and their symmetries in a homotopically coherent way. 
We take as a starting point the definition of a free BV theory given for manifolds without boundary in \cite{CosBook}. 
In the sense of that reference, a free BV theory on $\mathring M$ is defined by a graded vector bundle $E\to \mathring M$, a differential operator $Q$ on $E$ turning $(\sE,Q)$ into an elliptic complex, and a (cohomological degree --1) pairing $\ip_{loc}: E\otimes E\to \text{Dens}_{\mathring M}$. 
The pairing $\ip_{loc}$ induces a pairing $\ip$ on (compactly-supported) sections of $E$ via integration over $\mathring M$.  
The crucial axiom of a free BV theory assumes that $\ip$ is invariant with respect to $Q$. 
For a free BV theory of this sort, define the action functional
\[
S(\phi) = \frac{1}{2}\ip[\phi, Q\phi];
\]
in almost all cases, the invariance of $\ip$ with respect to $Q$ is proved by the same computations which derive the Euler-Lagrange equations of motion $Q\phi = 0$ from $S$. 
Namely, one shows that 
\[
\ip[Q\phi,\phi]_{loc}+(-1)^{|\phi|}\ip[\phi,Q\phi]_{loc}
\]
is a total derivative on $\mathring M$, and so its integral over $\mathring M$ gives an integral over  $\del \mathring M=\emptyset$.
When extending to $M$, therefore, one finds that the failure of the invariance of $\ip$ (under $Q$) to hold is measured by a bilinear pairing on the fields (sections of $E$) which depends only on the values of the fields and their normal jets on the boundary $\del N$. 
In other contexts, this bilinear form is called the Green's form.

In order to have control over the structure of the Green's form, we will introduce in Definition \ref{dfn: tnbft} a particular class of free BV theories whose Green's form has a simple structure. 
This definition is a special case of one introduced in \cite{ButsonYoo}.
Next, we remedy the failure of $\ip$ to be invariant for $Q$ by imposing boundary conditions. We do so in a homologically self-consistent way. In Definition \ref{dfn: bdycondition}, we define the precise nature of the boundary conditions we consider. Finally, in Definition \ref{dfn: bbft}, we fulfill the advertised purpose of this section by defining what we mean by a bulk-boundary field theory.

\subsection{Detailed definitions}

The following definition is a special case of Definition 3.9 of \cite{ButsonYoo}. Throughout the remainder of the text, we fix a manifold $M$ with boundary $\del M$, and we let $\iota: \del M \hookrightarrow M$ denote the inclusion. 

\begin{dfn}
\label{dfn: tnbft}
A free field theory on $M$ that is \textbf{topological normal to the boundary} (free TNBFT) consists of
\begin{itemize}
\item an elliptic complex $(\sE, Q)$ over $M$; here $\sE$ denotes the sheaf of sections of a $\ZZ$-graded smooth vector bundle $E\to M$ of finite total rank, and
\item a skew-symmetric cohomological degree --1 bundle map $\ip_{loc}:E\otimes E\to \Dens_M$
\end{itemize}
satisfying the following conditions:
\begin{enumerate}
\item The pairing $\ip_{loc}$ is fiberwise non-degenerate.
\item If $e_1,e_2\in \sE_{c}$ have compact support contained in $M\backslash \del M$, then
\begin{equation}
\label{eq: invpairing}
\int_M\left(\ip[Qe_1,e_2]_{loc}+(-1)^{|e_1|}\ip[e_1,Qe_2]_{loc}\right)=0,
\end{equation}
i.e., $Q$ is a derivation for the pairing $\ip$ induced from $\ip_{loc}$ by integration over $M$.
\item In a tubular neighborhood $T\cong \del M\times [0,\epsilon)$ of $\del M$, there is an isomorphism
\begin{equation}
\label{eq: decomp}
E\big|_{T} \cong \Eb \boxtimes \Lambda^\bullet (T^* [0,\epsilon)),
\end{equation}
where $\Eb$ is a graded vector bundle over $\del M$. 
With respect to this isomorphism, we require that
\begin{itemize}
\item In the tubular neighborhood $T$ where we make the identification of Equation \eqref{eq: decomp}, $Q$ have the form $\Qb\otimes 1+1\otimes d_{dR}$, where $\Qb$ gives $\sEb$ (the sheaf of sections of $\Eb$ on $\del M$) the structure of an elliptic complex on $\del M$ and $d_{dR}$ is the de Rham differential in the normal direction, and
\item the pairing $\ip_{loc}$ have the form $\ip_{loc,\del} \boxtimes \wedge$, where $\ip_{loc,\del}$ is a vector bundle map
\[
\ip_{loc,\del} : \Eb\otimes \Eb\to \Dens_{\del M}
\]
on $\del M$ which is fiberwise non-degenerate, of cohomological degree 0, skew-symmetric, and satisfies
\[
\int_{\del M} \left(\ip[\Qb e'_1,e'_2]_{loc, \del}+(-1)^{|e'_1|}\ip[e'_1,\Qb e'_2]_{loc, \del}\right)=0
\]
for all compactly supported sections $e'_1, e'_2$ of $\Eb$. 
\end{itemize}
\end{enumerate}
We will often use the letter $\sE$ to denote the full information of the TNBFT $(\sE,Q,\sEb,\Qb,\ip_{loc},\ip_{loc, \del})$.
\end{dfn}

Following the discussion in the previous subsection, 
we note that the pairing $\ip_{loc, \del}$ is essentially the datum of the Green's form.
There should be convenient generalizations of this setup that do not require the elliptic complex $\sE$ to be topological in the normal direction,
but these require more sophisticated analysis.

\begin{rmk}
Having a manifold with boundary is not essential here.
One can make a similar definition 
if there is a hypersurface $S$ in $M$ such that in a tubular neighborhood of $S$, 
the field theory has an analogous decomposition as an elliptic complex along $S$ tensored with the de Rham complex in the normal direction. 
(This setup is reminiscent, in Lorentzian field theories, of picking a foliation of a globally hyperbolic manifold by spacelike hypersurfaces.)
This more general situation would enable one to study certain domain walls in the BV context. 
Since we are only interested in boundary conditions, however, we do not explore this more general definition.
\end{rmk}

\begin{ntn}
There is a sheaf map $\rho: \sE\to \iota_*\sEb$ that is the composite of restriction to the tubular neighborhood $T$, followed by the isomorphism 
\[
\sE\big|_T\cong \sEb\,\hotimes\, \Omega^\bullet_{[0,\epsilon)},
\]
followed by the evaluation map 
\[
\sEb\,\hotimes\, \Omega^\bullet_{[0,\epsilon)}\to \sEb
\]
induced from the pullback of forms to $t=0$. The map $\rho$ is a cochain map.
We denote by $\ip$ the pairing between sections $e_1,e_2\in \sE$ (at least one of which has compact support) given by 
\[
\ip[e_1,e_2]=\int_M \ip[e_1,e_2]_{loc},
\]
and similarly for $\ip_{\del}$. 
\end{ntn}

\begin{rmk}
\label{rmk: action}
To make contact with Lagrangian field theory, we note that the pairing $\ip_{loc}$ and the differential $Q$ give rise to the action functional
\[
S(\phi)=\frac{1}{2}\ip[\phi,Q\phi].
\]
We will see below (e.g., Equation \ref{eq: noninvpairing}) that the data $(\sEb,\Qb,\ip_{\del})$ encode the boundary terms that arise from variational calculus. Our formulation of the problem guarantees that this construction is done in a way consistent with gauge symmetry on the boundary. 
\end{rmk}

\begin{rmk}
Condition (3) explains why TNBFTs are considered ``topological normal to the boundary'': 
a solution to the equations of motion is locally constant in the direction normal to the boundary,
as the fields in the normal direction are entirely dictated by the behavior of de Rham forms in that direction. 
\end{rmk}

Equation \ref{eq: invpairing} does not need to hold for sections $e_1,e_2$ that have non-zero values at the boundary. In fact, we find that, in general,
\begin{equation}
\label{eq: noninvpairing}
\ip[Qe_1,e_2]+(-1)^{|e_1|}\ip[e_1,Qe_2]= \ip[\rho e_1,\rho e_2]_{\del}.
\end{equation}
In other words, the pairing $\ip_{\del}$ on $\sEb$ measures the failure of $\ip$ to be invariant for the differential $Q$. Because of Equation \eqref{eq: noninvpairing}, a free TNBFT is not, strictly speaking, a field theory on $M$; however, a free TNBFT is still a field theory on $M\setminus \del M$. We will therefore persist in the usage of the term ``field theory'' for free TNBFTs, even when we consider them on the whole spacetime manifold $M$.

The pairing $\ip_{\del}$ is also closely related to the boundary terms which arise when integrating by parts in the derivation of the Euler-Lagrange equations of motion for the action of Remark \ref{rmk: action}. In order to construct the quantum observables in the Batalin-Vilkovisky (BV) formalism and to avoid these boundary terms, we will need Equation \ref{eq: invpairing} to hold for \emph{a broader class of} $e_1,e_2$ than the class of sections in $\sE$ that vanish on $\del M$. To remedy this, we introduce a notion of \emph{boundary condition}.

\begin{dfn}
\label{dfn: bdycondition}
A \textbf{local Lagrangian boundary condition} for a free TNBFT $\sE$ is a graded subbundle $L\to \partial M$ of $\Eb \to \partial M$ with the following four properties:
\begin{itemize}
\item the total rank of $L$ is half that of~$\Eb$,
\item $\ip_{loc, \del}$ is identically zero on $L\otimes L$, and
\item the sheaf $\sL$ of smooth sections of $L$ on $\partial M$ is a subcomplex of $\sEb$ with respect to the differential~$\Qb$.
\item The complex $(\sL,\Qb)$ is elliptic.
\end{itemize}
\end{dfn}

The following is the main definition of this section.

\begin{dfn}
\label{dfn: bbft}
Given a free TNBFT $\sE$ and a boundary condition $\sL$ for $\sE$, we will call the pair $(\sE,\sL)$ a \textbf{free bulk-boundary field theory}. For a free bulk-boundary field theory $(\sE,\sL)$, we denote by $\condfields$ the pullback of sheaves of complexes
\[
\begin{tikzcd}
\condfields\arrow[r]\arrow[d] \arrow[rd, phantom, "\lrcorner", at start] & \sE\arrow[d,"\rho"]\\
\iota_*\sL \arrow[r,hook] & \iota_*\sEb
\end{tikzcd}.
\]
In other words, $\condfields(U)$ consists of sections $e\in \sE(U)$ such that $\rho(e)\in \iota_*\sL(U)\subset \iota_*\sEb(U)$. We will call $\condfields$ the \textbf{sheaf of fields of the bulk-boundary system.} The fields in $\condfields$ satisfy a boundary condition imposed by the choice $\sL$. 
\end{dfn}

\begin{rmk}
The term \textit{bulk-boundary field theory} deserves to encompass a much larger class of situations,
including those where the equations of motion are not locally constant in the normal direction to the boundary,
but that is the only situation in which we work in this paper.
Hence we use the term here as shorthand.
We will also use the term ``bulk-boundary system'' to denote the same object.
\end{rmk}

\begin{rmk}
Note that all maps in the pullback square defining $\condfields$ are cochain maps, so the differential $Q$ on $\sE$ descends to one on $\condfields$. Since $\condfields$ is a subsheaf of $\sE$, one can also restrict the pairing $\ip$ to $\condfields$. Then, it is straightforward to verify (using the definitions directly) that Equation \ref{eq: invpairing} is satisfied for the fields of the bulk-boundary system.
Hence, we are free to think of $(\condfields, Q)$ as a bulk-boundary free BV theory.
In a sense, $\condfields$ is a maximal subspace of $\sE$ for which Equation \ref{eq: invpairing} is satisfied. We will find that most of the constructions of \cite{CG1} for the analogous case with $\del M =\emptyset$ carry over with little or no change once we use $\condfields$ for the space of fields.
\end{rmk}

\begin{rmk}
We note that, since the map $\rho$ is an epimorphism, $\condfields$ also coincides with the \emph{homotopy} pullback $\sE\times^h_{(\iota^* \sEb)}\iota_* \sL$ in a suitable model category of presheaves of complexes (see \cite{AR2} for more details). Hence, $\condfields$ imposes the boundary condition $\sL$ in a homotopically consistent way. Physically, the way we impose boundary conditions guarantees that the gauge symmetries of the theory remain manifest. In \cite{edgemodesyangmills}, a similar procedure is performed for abelian Yang-Mills theory. There, the authors also take care to impose boundary conditions in a homologically consistent way.
\end{rmk}

\subsection{Examples of free bulk-boundary systems}
\label{sec: examples}

We now discuss several examples of free bulk-boundary systems.

\begin{example}
\label{ex: toplmech}
Suppose $V$ is a symplectic vector space with symplectic form $\omega$. Let $M=[0,\epsilon)$ and $\sE=\Omega^\bullet_{[0,\epsilon)}\otimes V$, together with the pairing $\ip$ induced from the Poincar\'e duality pairing and $\omega$. Here, $\Eb=\sEb=V$, and $\ip_{loc, \del}=\omega$. This theory is \textbf{topological mechanics}. A Lagrangian subspace $L$ of $V$ gives a boundary condition for topological mechanics.
\end{example}

\begin{example}
\label{ex: psm}
Let $\Sigma$ be any surface with boundary, and let $V$ be a vector space with a constant Poisson structure, i.e., $V$ is a vector space equipped with a skew-symmetric map $\Pi: V^\vee \to V$. 
Let 
\[(\sE,Q)=( \Omega^\bullet_{\Sigma}\otimes V\oplus \Omega^\bullet_{\Sigma}\otimes V^\vee[1],\d_{dR}\otimes 1+1\otimes \Pi).\]
The pairing $\ip_{loc}$ is defined using the wedge product and the natural pairing between $V^\vee$ and $V$. It is evident that one can write
\[
(\sEb,\Qb)=(\Omega^\bullet_{\del \Sigma}\otimes V\oplus \Omega^\bullet_{\del \Sigma}\otimes V^\vee[1],\d_{dR}\otimes 1+1\otimes \Pi),
\]
and $\ip_{loc, \del}$ is again defined using the wedge product of forms and the canonical pairing between $V^\vee$ and $V$. This theory is a special case of the \textbf{Poisson sigma model} \cite{MR1854134}. The subcomplex $\Omega^\bullet_{\del \Sigma} \otimes V\subset \sEb$ gives a boundary condition for this theory. 
\end{example}

\begin{example}
Suppose $\fA$ is a complex vector space together with a non-degenerate symmetric bilinear pairing $\kappa$. Let $M$ be an oriented 3-manifold with boundary. For $(\sE,Q)$ we take $(\Omega^\bullet_{M}\otimes \fA[1],d_{dR})$. For the pairing $\ip_{loc}$ we take 
\[
\ip[\alpha,\alpha']_{loc} = \kappa(\alpha, \alpha'),
\]
where we are implicitly taking a wedge product of forms and only keeping the top-form component of the resulting wedge product. From these characterizations, it is evident that $(\sEb,\Qb) =(\Omega^\bullet_{\del M}\otimes \fA[1],d_{dR})$, and 
\[
\ip[\alpha,\alpha']_{loc, \del}=\kappa(\alpha, \alpha').
\]
This theory is an \textbf{abelian Chern-Simons theory}. 
In the bulk $3$-manifold, $M \setminus \partial M$, this elliptic complex is simply abelian Chern-Simons theory where we view $\fA$ as an abelian Lie algebra.
The solutions to the bulk equations of motion are the $\fA$-valued closed one-forms. 

If $M=\Sigma\times \RRge$ (where $\Sigma$ is a Riemann surface), the space of fields is endowed with the decomposition
\[
\sE=\Omega^{0,\bullet}_{\Sigma}\,\hotimes\, \Omega^\bullet_{\RRge}\otimes \fA[1]\oplus \Omega^{1,\bullet}_{\Sigma}\,\hotimes\, \Omega^\bullet_{\RRge}\otimes\fA,
\]
with differential $Q=\del+\delbar+d_{dR}$.

The boundary condition we consider depends on the choice of a complex structure on the boundary $\del M$.
Henceforth, when we want to stress the dependence on the complex structure, we denote the boundary Riemann surface by~$\Sigma$.

Given a holomorphic vector bundle $V$ on $\Sigma$, there is a resolution for its sheaf of holomorphic sections $\sV^{hol}$ given by the Dolbeault complex $\left(\Omega^{0,\bullet}(\Sigma, V), \dbar\right)$. 
The differential is the Dolbeault operator $\dbar : \Omega^{0}(\Sigma, V) \to \Omega^{0,1}(\Sigma, V) = \Gamma(T^{*0,1} \otimes V)$ defining the complex structure on $V$. 
In the case that $V = T^{*1,0}$, we denote this Dolbeault complex by $\Omega^{1,\bullet}(\Sigma)$ with the $\dbar$-operator understood. 

The subcomplex $\Omega^{1,\bullet}_\Sigma\otimes \fA\subset \Omega^\bullet_\Sigma\otimes \fA[1]$ defines a boundary condition for abelian Chern-Simons theory (at any level if $M=\CC\times \RRge$). 
To see this, consider $\sL$ as the sections of a vector bundle $L$ on $\Sigma$.
It is clear that the rank of $L$ is half that of $\Eb$, where $\Eb$ is the vector bundle whose sheaf of sections is $\sE_{\partial}$. 
Also, $\ip_{loc, \del}$ is identically zero on $L \otimes L$ since only forms of type $(1,\bullet)$ appear in $\sL$. 
Finally, the cochain complex $ \Omega^{1,\bullet}_\Sigma\otimes \fA$ is a subcomplex of the full de Rham complex since $\dbar \alpha = \d \alpha$ for forms $\alpha$ of type $(1,\bullet)$. 
We call it the \textbf{chiral WZW boundary condition}. Notice that although Chern-Simons theory is topological, we may choose a non-topological boundary condition for the theory. In this situation, the boundary condition has a chiral, or holomorphic, nature.

On $M=\Sigma \times \RR_{\geq 0}$ we remark on a slightly different presentation of abelian Chern--Simons theory, as a deformation of BF theory.
Using $\kappa$, we can identify the fields $\sE$ with
\[
\Omega^{0,\bullet}_{\Sigma}\,\hotimes\, \Omega^\bullet_{\RRge}\otimes \fA[1]\oplus \Omega^{1,\bullet}_{\Sigma}\,\hotimes\, \Omega^\bullet_{\RRge}\otimes\fA^*,
\]
by keeping track of the Dolbeault decomposition along $\Sigma$.
In this decomposition, fields are pairs $(\alpha,\beta)$, where $\beta$ now takes values in $\fA^*$. 
Upon making this identification the pairing becomes $\ip[\alpha, \beta]_{loc} = \alpha \beta$ and the differential is $\kappa \partial + \dbar + d_{dR}$. 
In this presentation, it now makes sense to contemplate the limit $\kappa \to 0$, 
which results in a BF theory on $\Sigma \times \RR_{\geq 0}$ that is partially topological. 
We dub this boundary condition $\sL = \Omega^{1,\bullet}_\Sigma\otimes \fA^*$, the chiral WZW boundary condition. 
We note that the chiral WZW boundary condition is indeed a boundary condition for Chern-Simons theory for any $\kappa$ (including $\kappa=0$).
We remark that even after quantization, there will be no shift by the critical level, since we only consider free theories (i.e., abelian WZW theories).
\end{example}

\begin{example}
\label{ex: highercs}
Let $M$ be an oriented manifold of dimension $4k+3$, and suppose $\del M$ has the structure of a complex manifold (of dimension $2k+1$). 
As in the case of ordinary Chern--Simons theory, we fix a vector space $\fA$ equipped with a non-degenerate symmetric pairing $\kappa$. 

The fields are the (shifted) de Rham forms
\[
\sE = \Omega^\bullet_M \otimes \fA [2k+1], 
\] 
with $Q=\d_{dR}$.
There is an obvious degree $(-1)$-pairing on the fields given by wedging and integrating. 
This theory describes \textbf{higher-dimensional abelian Chern-Simons theory}, the action functional reads 
\[
\int_M \kappa(\alpha, \d \alpha)
\]
where $\alpha \in \Omega^\bu_M \otimes \fA[2k+1]$.

Geometrically, this theory encoding deformations of the trivial flat $U(1)$ $k$-gerbe with fiber $\fA$.
(Taking $k=0$, we note that a flat $U(1)$ 0-gerbe is a flat $U(1)$-bundle.)
We have $\sEb=\Omega^\bullet_{\del M}\otimes \fA[2k+1]$, and $\Qb= \d_{dR}$. 
The pairings are defined exactly as in the previous example, by wedging and integration. 

Like the WZW boundary condition for ordinary Chern--Simons, there is a boundary condition which utilizes the complex structure of the boundary $\partial M$. 
It is given by the Lagrangian
\[
\sL = \Omega^{> k, \bullet}_{\partial M} \otimes \fA [k],
\] 
which we call the \textbf{intermediate Jacobian} boundary condition, due to it being a piece of the Hodge filtration.
We return to this example in more detail in Section~\ref{sec: higherCS}. 
\end{example}

\begin{example}
\label{ex: Riemhighercs}
There is an alternative boundary condition of higher dimensional Chern--Simons that depends on a Riemannian metric  rather than a complex structure. 
As above, let $M$ be an oriented manifold of dimension $4k+3$, and suppose the boundary $N = \partial M$ is equipped with a Riemannian structure. 
In turn, we decompose the middle de Rham forms on $N$ into the $\pm \sqrt{-1}$-eigenspaces
\begin{equation}\label{eqn:decomp}
\Omega^{2k+1}(N) = \Omega^{2k+1}_+ (N) \oplus \Omega^{2k+1}_-(N) 
\end{equation}
of the Hodge star operator. 

Consider the subcomplex of~$\Omega^\bu_{\partial M} \otimes \fA [2k+1]$:
\[
\sL = \bigg(\Omega^{2k+1}_+(N) \otimes \fA \xrightarrow{\d} \Omega^{2k+2} (N) \otimes \fA [-1] \xrightarrow{\d} \cdots \xrightarrow{\d} \Omega^{4k+2}(N) \otimes \fA [-2k-1]\bigg) .
\]
It defines a boundary condition for $(4k+3)$-dimensional abelian Chern-Simons theory. 

The elliptic complex on $N$ perpendicular to the boundary condition $\sL$ is $\sL^\perp$, can be identified with
\begin{equation}\label{eqn:dminus}
\sL^\perp = \bigg( \Omega^{0} (N) \otimes \fA [2k+1].\xto{\d} \Omega^1(N) \otimes \fA[2k] \to \cdots \to \Omega^{2k} (N) \otimes \fA [1] \xto{\d_-} \Omega^{2k+1}_- (N) \otimes \fA \bigg)
\end{equation}
where $\d_- : \Omega^{2k} (N) \to \Omega^{2k+1}_- (N)$ denotes the de Rham differential followed by the projection using the decomposition~(\ref{eqn:decomp}).
\end{example}

\section{The factorization algebras at play}
\label{sec: FAconstruction}

In this section we describe the three factorization algebras that appear in a bulk-boundary system:
\begin{itemize}
\item the observables $\Obs_\sE$ living purely in the bulk $\mathring M$, which depend only on the BV theory in the bulk,
\item the observables $\Obs_{\sL}$ of the boundary condition, which live only on the boundary $\partial M$, and
\item the observables $\Obs_{\sE,\sL}$ of the bulk-boundary system, which lives on the whole manifold $M$ with boundary.
\end{itemize}
There are classical and quantum versions of both factorization algebras.
Now aware of the these three algebras, 
the reader can skip to Section~\ref{sec: main theorem} and understand the statement of our main theorems.

The bulk observables $\Obs_\sE$ arising here were defined in \cite{CG1}, and they are a straightforward interpretation of the observables in a free BV theory.
The observables of the boundary condition $\Obs_{\sL}$ are defined in a similar way.
At the classical level, they are simply functions on the space $\sL$, but  the quantization uses a Poisson structure arising from the map to $\sEb$ that identifies $\sL$ as a Lagrangian in $\sEb$.
In this sense, the boundary condition behaves like a Poisson field theory,
in contrast to the symplectic-type bulk theory.

The observables $\Obs_{\sE,\sL}$ are constructed in an analogous way to the other algebras.
The classical observables realize, in a homotopical sense, the algebra of functions on the space of solutions to the equations of motion that satisfy the boundary condition.
The quantization is in the spirit of the BV formalism; 
it amounts to changing the differential by adding an operator determined by the natural pairing on the fields, with boundary condition imposed.
Our main theorems show that $\Obs_{\sE,\sL}$ interpolates between $\Obs_\sE$ and~$\Obs_{\sE,\sL}$,
and in this way we see that there is a natural quantization of the bulk-boundary system that realizes a correspondence between the bulk and boundary systems themselves.

\subsection{Bulk observables}
\label{sec: bulk obs}

Chapter 4 of \cite{CG1} is devoted to constructing and analyzing the observables, both classical and quantum, of a free BV theory on a smooth manifold.
Here we simply recall the definitions.

\begin{dfn}
Let $\sE$ be a free TNBFT. 
The factorization algebra of \textbf{classical observables for $\sE$} assigns to an open subset $U\subset \mathring M$ the (differentiable) cochain complex
\[
(\Sym(\sE_c[1](U)),Q)=:\Obcl_{\sE}(U),
\]
where the symmetric powers are taken with respect to the completed bornological tensor product of convenient vector spaces (see, e.g. Definition B.4.9 and Section B.5.2 of~\cite{CG1}).
\end{dfn}

Note that for a smooth vector bundle $V\to M$, these completed tensor products can be understood concretely as
\[
(\sV_{c}(U))^{\hotimes k}\cong C^\infty_{c}(U^{\times k}; V^{\boxtimes k}). 
\]
In other words, they are the compactly supported sections on the $k$-fold product $U^k$ with values in the natural vector bundle $V^{\boxtimes k} \to U^k$.

Something a bit subtle is happening in this definition. 
{\it A priori} the classical observables ought to consist of functions on the fields $\sE$;
in other words, they ought to be a symmetric algebra on the linear dual vector space or, better yet, the continuous linear dual.
Here, however, we took a symmetric algebra on $\sE_c[1]$,
which looks different.
Two facts combine to explain our choice.
First, the local pairing lets us identify the continuous linear dual of $\sE$ with the distributional and compactly supported sections of $E[1] \to M$:
every such section determines a linear functional on $\sE$ by plugging it into the pairing.
Second, the Atiyah-Bott lemma (see Appendix E of \cite{CG1}) shows that the elliptic complex of distributional, compactly supported sections of $E[1] \to M$ is continuously quasi-isomorphic to the subcomplex of smooth, compactly supported sections of $E[1] \to M$.
Together, these facts show that our definition captures correctly --- up to quasi-isomorphism --- the most natural choice of classical observables.
Concretely, we are working with {\em smeared} observables.

With our definition, BV quantization is straightforward,
because the pairing determines a natural BV Laplacian $\Delta: \Sym(\sE_c[1](U)) \to \Sym(\sE_c[1](U))$ as follows.
We set $\Delta = 0$ on the constant and linear terms (i.e., the subspace $\Sym^{\leq 1}(\sE_c[1](U)$), 
and we require
\[
\Delta(a b) = \Delta(a) b + (-1)^{|a|} a \Delta(b) + \{a,b\}
\]
for arbitrary $a$ and $b$. 
Here, $\{\cdot,\cdot\}$ is the unique biderivation (with respect to the product in the symmetric algebra) on 
\[
\Sym(\sE_c[1](U))\times \Sym(\sE_c[1](U))
\]
which coincides with $\ip[\cdot,\cdot]$ on 
\[
\sE_c[1](U)\times \sE_c[1](U).
\]
This equation defines $\Delta$ inductively on the symmetric powers. 

For instance, if $a$ and $b$ are linear, then $ab \in \Sym^2(\sE_c[1](U))$, 
and we see that
\[
\Delta(a b) = \ip[a,b]
\]
because we have set $\Delta(a) = 0 = \Delta(b)$. 
Such pure products $ab$ span $\Sym^2(\sE_c[1](U))$, 
so we have defined $\Delta$ on all quadratic functionals.
To determine $\Delta$ on $\Sym^3(\sE_c[1](U)$, we use the equation and our knowledge of $\Delta$ on $\Sym^{\leq 2}(\sE_c[1](U))$;
inductively continue this process to higher symmetric powers.

By construction, $\Delta$ is a second-order differential operator on the graded commutative algebra $\Sym(\sE_c[1](U))$.
It is straightforward to verify that $\Delta^2 = 0$ and that $\Delta$ commutes with $Q$ (because $Q$ is compatible with the pairing $\ip$).
Hence we posit the next definition, following the BV formalism.

\begin{dfn}
Let $\sE$ be a free TNBFT. 
The factorization algebra of \textbf{quantum observables for $\sE$} assigns to an open subset $U\subset \mathring M$, the (differentiable) cochain complex
\[
(\Sym(\sE_c[1](U))[\hbar],Q+\hbar \Delta)=:\Obq_{\sE}(U),
\]
where the symmetric powers are taken with respect to the completed bornological tensor product of convenient vector spaces.
\end{dfn}

\subsection{Observables of the boundary condition}

A boundary condition $\sL$ leads to factorization algebras on the boundary in a parallel fashion.

At the classical level, the idea is that we want to use a commutative algebra of functions on $\sL$,
which we take to be a symmetric algebra on the continuous linear dual $\sL^*$.
It is convenient to work with a smeared (and hence smooth) version of $\sL^*$.
One approach is to note that $\sL$ is a subspace of $\sEb$, 
and so we could work with the quotient of $\Obcl_{\sEb}$ by the ideal of functions that vanish on the subspace $\sL$.
This approach is canonically determined by the map $\sL \to \sEb$, and hence manifestly meaningful.
On the other hand, it is convenient to have an explicit graded vector bundle to use, 
particularly when we quantize and need to transport the BV Laplacian for the bulk theory to an operator on the boundary observables.
Hence we now introduce a different approach that we will see, later, is equivalent.

\begin{constr}
Let $\sL$ be a boundary condition for a free TNBFT associated to the graded subbundle $L$ of $\Eb$. 
Let $L^\perp$ be a complementary subbundle so that $\Eb = L \oplus L^\perp$. 
Let $\sL^\perp$ denote the sheaf of smooth sections of $L^\perp$,
and let $\sL^\perp_c$ the cosheaf of compactly supported smooth sections of~$L^\perp$.
With respect to this splitting, the differential $\Qb$ decomposes as $Q_{L}+Q_{L^\perp}+Q_{rel}$, 
where $Q_{L}$ preserves $\sL$, $Q_{L^\perp}$ preserves $\sL^\perp$, and $Q_{rel}$ maps $\sL^\perp$ to $\sL$. 
(There is no operator from $\sL$ to $\sL^\perp$ because we have assumed that $\Qb$ preserves~$\sL$.)  
\end{constr}

Notice that every element of $\sL^\perp_c$ determines a continuous linear functional on $\sL$ via the local pairing $\ip_{loc, \del}$ on $\sE_\partial$.
In fact, these smeared observables encompass essentially all the linear functionals:
by the Atiyah-Bott lemma, the complex $(\sL^\perp_c, Q_{L^\perp})$ is continuously quasi-isomorphic to the complex of compactly supported distributional sections of $\Eb/L$ with the differential induced by $\Qb$.
Hence a symmetric algebra on $\sL^\perp_c$ deserves to be understood as an algebra of observables.

\begin{dfn}
\label{dfn: classical currents}
Let $\sL$ be a boundary condition for a free TNBFT.
The factorization algebra of \textbf{classical boundary observables for $\sL$} assigns to an open subset $U\subset \partial M$, 
the (differentiable) cochain complex
\[
(\Sym(\sL^\perp_c(U)),Q_{L^\perp})=:\Obcl_{\sL}(U),
\]
where the symmetric powers are taken with respect to the completed bornological tensor product of convenient vector spaces.
\end{dfn}

At the quantum level, one obtains a Heisenberg-type deformation of $\Obcl_\sL$ as a factorization algebra.
The relevant deformation arises from a canonical bilinear form on $\sL^\perp_c$ determined by our construction.
Let $\mu$ be the following local degree $-1$ cocycle on $\sL^\perp_c$:
for any pair of compactly-supported sections $e_1$ and $e_2$ on an open $U\subset \del M$, define
\begin{equation}
\label{eq: twistcocycle}
\mu(e_1,e_2)=-\int_{\del M}\ip[e_1,Q_{rel} e_2]_{loc,\del}.
\end{equation}
The seemingly strange choice of sign is for a convenient statement of our main result, Theorem \ref{thm: maingenlq}.
We use this pairing to define a second-order differential operator $\hbar \Delta_\mu$ on $\Sym(\sL^\perp_c(U))[\hbar]$ of cohomological degree 1,
just as we constructed the BV Laplacian $\Delta$ on the bulk observables.

\begin{dfn}
\label{dfn: quantum currents}
Let $\sL$ be a boundary condition for a free TNBFT.
The factorization algebra of \textbf{quantum boundary observables for $\sL$} assigns to an open subset $U\subset \partial M$, 
the (differentiable) cochain complex
\[
(\Sym(\sL^\perp_c(U))[\hbar],Q_{L^\perp} + \hbar \Delta_\mu)=:\Obq_{\sL}(U),
\]
where the symmetric powers are taken with respect to the completed bornological tensor product of convenient vector spaces.
\end{dfn}

\begin{rmk}
The quotient map $q_L: \Eb \to \Eb/L$ makes $L^\perp$ canonically isomorphic to the quotient bundle $\Eb/L$,
and hence we can identify $L^\perp$ with the image of a splitting of that quotient map.
Any two choices of splitting $L^\perp_0$ and $L^\perp_1$ are related by a bundle automorphism of $\Eb$.
We emphasize this isomorphism is at the point set level; 
it is an automorphism of graded vector bundles.
Using this automorphism one gets a natural equivalence between the associated pairings $\mu_0$ and~$\mu_1$.
Hence, any two versions of the construction above are isomorphic.
\end{rmk}

\begin{rmk}
The pairing $\mu$ determines a central extension $\widehat{\sL^\perp}_c(U)$ of $\sL^\perp_c(U)$ (as an abelian dg Lie algebra) by the vector space $\CC\hbar$ placed in degree~1.
That is, $\widehat{\sL^\perp}_c(U)$ is a kind of Heisenberg Lie algebra.
As it is defined for any open subset $U$ of the whole manifold $\del M$, 
we get a precosheaf of central extensions.
The quantum observables are then the Chevalley-Eilenberg chains of this dg Lie algebra $\widehat{\sL^\perp}_c$.
Thus our definition above is a case of taking a twisted enveloping factorization algebra. 
See Definition 3.6.4 of \cite{CG1} for an extensive discussion,
and Chapter 4 for an explanation of why this construction encodes canonical quantization.
\end{rmk}

\subsection{Observables of the bulk-boundary system}

There is a natural way to extend our methods above to obtain observables on $\sE_\sL$, 
which describes solutions to the equations of motion for fields in $\sE$ that must live in $\sL$ on the boundary.
We will begin by describing the corresponding functor
\[
\mathrm{Opens}(M)\to \mathrm{Ch}
\]
and then turn to verifying it is a factorization algebra.

\begin{dfn}
Let $(\sE,\sL)$ be a free bulk-boundary field theory. 
The prefactorization algebra of \textbf{bulk-boundary classical observables for $(\sE,\sL)$} assigns to each open subset $U\subset M$, 
the (differentiable) cochain complex
\[
(\Sym(\condfieldscs[1](U)),Q)=:\Obcl_{\sE,\sL}(U),
\]
where $\condfieldscs$ denotes the cosheaf of compactly-supported fields for the bulk-boundary system
(i.e., elements of $\sE_{\sL}(U)$ whose support is compact).
The symmetric powers are taken with respect to the completed bornological tensor product of convenient vector spaces.
\end{dfn}

To see that $\Obcl_{\sE,\sL}$ is a prefactorization algebra, 
one can borrow verbatim Section 3.6 of~\cite{CG1}.

\begin{rmk}
We note here that in the appendices, 
we provide two useful results,
\begin{itemize}
\item a more geometric interpretation of the tensor powers $\sE_{\sL}(U)^{\hotimes k}$ and
\item a version of the Atiyah-Bott lemma for the bulk-boundary fields (cf. Appendix D,~\cite{CG1}),
\end{itemize}
that underpin our choice of smeared observables for the bulk-boundary system.
Analogs of these results played a key role in the case of free BV theories on manifolds without boundary.
The first allows us to recognize why the completed bornological tensor product is natural here,
and it also plays a role in the proof that we get a factorization algebra.
The second justifies that working with the continuous linear dual $\sE_{\sL}(U)^\vee$ adds no further information than $\sE_{\sL,c}(U)[1]$, 
up to continuous quasi-isomorphism.
\end{rmk}

In fact, we can, without much difficulty, show that the classical observables form a factorization algebra, that is, they satisfy the local-to-global condition of Definition 6.1.4 in~\cite{CG1}. 

\begin{thm}
\label{thm: obcl}
For a free bulk-boundary theory $\sE$ with local Lagrangian boundary condition $\sL$, 
the classical observables $\Obcl_{\sE,\sL}$ form a factorization algebra.
\end{thm}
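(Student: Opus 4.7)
The plan is to adapt the argument of Chapter 6 of \cite{CG1} to the bulk-boundary setting. Since the prefactorization algebra axioms have been noted to follow verbatim from Section 3.6 of \cite{CG1} (with $\condfieldscs$ substituted for $\sE_c$), the content of the theorem is the Weiss local-to-global axiom: for every Weiss cover $\{U_i\}_{i \in I}$ of an open $U \subset M$, the \v{C}ech complex built from $\Obcl_{\sE,\sL}$ evaluated on finite intersections should map quasi-isomorphically to $\Obcl_{\sE,\sL}(U)$.

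The first step is to verify that $\condfieldscs$ is a cosheaf with respect to any \emph{ordinary} open cover of any open $V \subset M$. A partition-of-unity argument decomposes any $e \in \condfieldscs(V)$ into finitely many compactly supported summands $\phi_i e$ adapted to the cover; each summand still satisfies the boundary condition because the restriction map $\rho$ and the inclusion $\iota_*\sL \hookrightarrow \iota_*\sEb$ are both $C^\infty$-linear and local, so $\rho(\phi_i e) = \phi_i \rho(e) \in \iota_*\sL$. The differential $Q$ is a local differential operator and hence respects these decompositions.

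Next, I would invoke the tensor-power identification established in the appendix,
\[
\condfieldscs(U)^{\hotimes k} \cong \Gamma_c\big(U^k;\, F^{(k)}\big),
\]
where $F^{(k)}$ is a natural graded vector bundle on $U^k$ carrying appropriate boundary conditions on the strata where some coordinates lie in $\partial M$. By the very definition of a Weiss cover, the products $\{U_{i_1} \times \cdots \times U_{i_k}\}_{(i_1,\ldots,i_k) \in I^k}$ form an ordinary open cover of $U^k$. Applying the first step on $U^k$ with the bundle $F^{(k)}$ in place of $E$ shows that $\condfieldscs^{\hotimes k}$ is a Weiss cosheaf on $U$. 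Passing to $S_k$-coinvariants and summing over $k$ preserves the Weiss cosheaf property, and the induced differential from $Q$ remains compatible; this yields the result for $\Obcl_{\sE,\sL}$.

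The main obstacle is ensuring that the argument is valid in the category of differentiable cochain complexes, where the completed bornological tensor products and (homotopy) colimits must interact correctly. The bulk-boundary Atiyah-Bott lemma proved in the appendix supplies exactly what is needed: the smeared (smooth) observables are continuously quasi-isomorphic to the distributional ones, so the \v{C}ech-type comparison carried out at the level of smooth compactly supported sections genuinely computes the cohomology of the expected algebra of functionals on $\condfields$.
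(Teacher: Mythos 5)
Your proposal is correct and follows essentially the same route as the paper's proof: both reduce to the symmetric powers via the appendix identification of $\condfieldscs(U)^{\hotimes k}$ with compactly supported sections over $U^k$ satisfying a pointwise lie-in condition at the boundary, and both observe that the standard partition-of-unity contracting homotopy (Lemma A.5.7 of \cite{CG1}) involves only multiplication by smooth functions and so preserves that condition. The paper simply cites that contracting homotopy directly rather than re-deriving the ordinary-cover cosheaf statement, but the content is the same.
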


\begin{proof}
The context here is nearly identical to that of Theorem 6.5.3(ii) of \cite{CG1}. By the same arguments as in the proof of that theorem, we need only to show that, given any Weiss cover $\fU=\{U_i\}_{i\in I}$ of an open subset $U\subset M$, the map 
\begin{equation}
\label{eq: cechmap}
\bigoplus_{n=0}^\infty \bigoplus_{i_1,\cdots, i_n}\Sym^m\left( \condfieldscs[1](U_{i_1}\cap\cdots\cap U_{i_n})\right) [n-1]\to \Sym^m (\condfieldscs[1](U))
\end{equation}
is a quasi-isomorphism, where the left-hand side is endowed with the \v{C}ech differential. According to the appendix, particularly Corollary~\ref{crl: borntensorofdirichlet}, 
\[
\Sym^m\left( \condfieldscs[1](U_{i_1}\cap\cdots\cap U_{i_n})\right)
\]
is the subspace of 
\[
\Sym^m\left( \sE_{c}[1](U_{i_1}\cap\cdots\cap U_{i_n})\right)\subset C^\infty_{c}((U_{i_1}\cap \cdots \cap U_{i_n})^m,(E[1])^{\boxtimes m})
\]
consisting of those sections that lie in $(L\oplus \Eb \d t)_{x_1}\otimes E_{x_2}\otimes\cdots \otimes E_{x_m}$ whenever the first of the points $x_1,\cdots, x_m\in (U_{i_1}\cap \cdots \cap U_{i_n})^m$ lies on $\del M$, and similarly for $x_2, \cdots, x_m$. The proof of Lemma A.5.7 of \cite{CG1} constructs a contracting homotopy of the mapping cone of Equation \ref{eq: cechmap} without any conditions imposed at the boundary of $M$. Because the contracting homotopy involves only multiplication by smooth functions and addition of sections, it preserves the lie-in condition for $(\condfieldscs)^{\hotimes m}$. Hence, the contracting homotopy from the proof of Lemma A.5.7 of \cite{CG1} gives a contracting homotopy for the mapping cone of Equation~\ref{eq: cechmap}, so that the map of that equation is a quasi-isomorphism.
\end{proof}

We also define a factorization algebra of quantum observables.

\begin{dfn}
Let $(\sE,\sL)$ be a free bulk-boundary field theory. 
The prefactorization algebra of \textbf{bulk-boundary quantum observables for $(\sE,\sL)$} assigns to each open subset $U\subset M$, 
the (differentiable) cochain complex
\[
(\Sym(\condfieldscs[1](U)[\hbar],Q+\hbar \Delta)=:\Obq_{\sE,\sL}(U).
\]
Here $\Delta$ is the restriction of the BV Laplacian for $\Obq_\sE$ to this graded subspace. 
\end{dfn}

\begin{rmk}
The fact that $Q+\hbar\Delta$ is a differential on $\Obq_{\sE,\sL}(U)$ requires some proof. 
In the case where $\del M$ is empty, it follows from the invariance of $\ip$ under $Q$ (see equation \ref{eq: invpairing}). 
In the present case, equation \ref{eq: invpairing} is satisfied for $\condfields$, 
so that $Q+\hbar \Delta$ squares to zero on $\Obq_{\sE,\sL}(U)$. 
This property motivates the use of local Lagrangian boundary conditions for TNBFTs.
\end{rmk}

\begin{thm}
The functor $\Obq_{\sE,\sL}$ is a factorization algebra. 
\end{thm}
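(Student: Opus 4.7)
My plan is to reduce the statement to Theorem~\ref{thm: obcl} by a filtration argument exploiting the fact that $\Obq_{\sE,\sL}$ differs from $\Obcl_{\sE,\sL}[\hbar]$ only through addition of $\hbar\Delta$ to the differential. First I would verify that $\Obq_{\sE,\sL}$ is in fact a prefactorization algebra, i.e., that the structure maps (induced, as in the classical case, by multiplication in the symmetric algebras) commute with $Q+\hbar\Delta$. This is routine: $Q$-compatibility is inherited from the classical argument, and $\Delta$-compatibility follows from the second-order identity $\Delta(ab)=\Delta(a)b+(-1)^{|a|}a\Delta(b)+\{a,b\}$ together with the observation that, because $\ip$ arises from a local pairing, $\{a,b\}$ vanishes whenever $a$ and $b$ are supported on disjoint opens.

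For the Weiss-cover axiom, I would fix a Weiss cover $\fU=\{U_i\}_{i\in I}$ of an open $U\subset M$ and endow the \v{C}ech complex and $\Obq_{\sE,\sL}(U)$ with the additional grading in which an element in $\Sym^m(\condfieldscs[1])\cdot\hbar^j$ carries \emph{weight} $w:=m+2j$. The key observation is that both pieces of the differential and all \v{C}ech structure maps preserve weight: $Q$ preserves $m$ and $j$ separately, while $\hbar\Delta$ sends $\Sym^m\hbar^j$ to $\Sym^{m-2}\hbar^{j+1}$, and $(m-2)+2(j+1)=m+2j$. For each fixed $w$ the weight-$w$ subcomplex is a \emph{finite} direct sum of pieces indexed by $0\leq j\leq\lfloor w/2\rfloor$; filtering it by $\hbar$-degree yields a bounded filtration whose associated spectral sequence necessarily converges.

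On the $E_0$-page the operator $\hbar\Delta$ drops out, since it strictly raises $\hbar$-degree, leaving only the classical differential $Q$. The $E_1$-page of the \v{C}ech-to-global map in weight $w$ is therefore a direct sum over $j$ of the classical \v{C}ech-to-global maps on $\Sym^{w-2j}(\condfieldscs[1])$, each of which is a quasi-isomorphism by (the proof of) Theorem~\ref{thm: obcl}. The boundedness of the filtration then promotes this to a quasi-isomorphism of the weight-$w$ total complexes, and direct summing over $w$ finishes the argument.

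The main thing to watch is that the argument really is purely algebraic in $\hbar$ and in the symmetric degree: once one has introduced the weight grading so that the filtration on each weight piece is finite, convergence is automatic and no refined analysis is needed. All of the analytic content — in particular the bulk-boundary Atiyah-Bott-style contracting homotopy used to establish the classical Weiss condition at each fixed symmetric power — is inherited unchanged from Theorem~\ref{thm: obcl}, so the hardest step of the quantum proof has in effect already been carried out in the classical one.
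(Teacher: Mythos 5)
Your proposal is correct and follows essentially the same route as the paper: prefactorization structure is inherited from the classical case because the BV Laplacian is local, and the Weiss gluing axiom is reduced to Theorem~\ref{thm: obcl} by filtering by powers of $\hbar$ (the paper uses the filtration $F^n = \bigoplus_{j+k\leq n}\hbar^j\Sym^k$, whose associated graded likewise kills $\hbar\Delta$ and leaves the classical \v{C}ech comparison map). Your auxiliary weight grading $w=m+2j$ is just a slightly more explicit way of guaranteeing convergence of the spectral sequence, which the paper leaves implicit.
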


\begin{proof}
That $\Obq_{\sE,\sL}$ is a prefactorization algebra is an immediate consequence of the fact that $\Obcl_{\sE,\sL}$ is, since the BV Laplacian is local. 
To see that the local-to-global condition is also satisfied, note that $\Obq_{\sE,\sL}(U)$ has a filtration given by 
\[
F^n\Obq_{\sE,\sL}(U) = \bigoplus_{j+k \leq n} \hbar^j \Sym^k(\sE_{\sL,c}(U)[1])
\]
for every open subset $U$.
The differential on $\Obq_{\sE,\sL}(U)$ preserves this filtration. Moreover, for any Weiss cover $\fU$ of $U$, the \v{C}ech complex $\check{C}(\fU, \Obq_{\sE,\sL})$ for this cover also has a filtration and the map 
\begin{equation}
\label{eq: cech2}
\check{C}(\fU, \Obq_{\sE,\sL})\to \Obq_{\sE,\sL}(U)
\end{equation}
respects this filtration, hence induces a map of spectral sequences. The induced map on the associated graded spaces (the $E^1$ page) is the map
\[
\check{C}(\fU,\Obcl_{\sE,\sL}[\hbar])\to \Obcl_{\sE,\sL}(U)[\hbar],
\]
which was shown to be a quasi-isomorphism in the proof of Theorem \ref{thm: obcl}. Hence the map in Equation \ref{eq: cech2} is a quasi-isomorphism.
\end{proof}

\section{The main theorems}
\label{sec: main theorem}

In this section, we state and prove a generalization of Theorem~\ref{thm: main} that applies to a general free bulk-boundary field theory $\sE$ with boundary condition $\sL$. Without loss of generality , we will assume that the underlying manifold is of the form $M=\Mbdy\times \RR_{\geq 0}$, so that $\partial M = M_\partial$.  Let $\pi: M\to \Mbdy$ denote projection onto the boundary. We will also assume that the space of fields is globally of the form $\sE\,\hotimes\, \Omega^\bullet_{\RRge}$, with the pairing $\ip$ of the form specified in Definition~\ref{dfn: tnbft}. 

\begin{rmk}
The assumption that $M=\Mbdy\times \RR_{\geq 0}$ is purely for convenience. Our methods construct factorization algebras on an arbitrary manifold with boundary, so long as one can find a tubular neighborhood of the boundary on which the fields decompose to be ``topological normal to the boundary.'' After all, factorization algebras are local-to-global in nature, so we can patch together a construction near the boundary with a construction far into the bulk.
\end{rmk}

Here is our generalization of Theorem~\ref{thm: main} at the classical level.

\begin{thm}
\label{thm: maingenlcl}
For a free bulk-boundary field theory $(\sE,\sL)$, we have the following identifications:
\begin{enumerate}
\item Let $\Obcl$ denote the factorization algebra on $\mathring M$ of classical observables for $\sE$, constructed using the techniques of Chapter 4 of \cite{CG1}. Then, there is an isomorphism
\[
\Obcl_{\sE,\sL}\big|_{\mathring M} \cong \Obcl_\sE.
\]
\item There is a quasi-isomorphism
\begin{align}
\label{eq: qicl}
\II^{\cl}: \Obcl_\sL&\to \pi_*\Obcl_{\sE,\sL}.
\end{align}
\end{enumerate}
\end{thm}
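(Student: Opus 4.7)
Part (1) is a tautology. For any open $U \subset \mathring M$ we have $U \cap \partial M = \emptyset$, so the pullback-square defining $\condfields$ collapses to $\condfields(U) = \sE(U)$; taking compactly supported sections and then the symmetric algebra with differential $Q$ reproduces $\Obcl_\sE(U)$, and the structure maps on both sides are defined identically on~$\mathring M$.

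For Part (2), my plan is to construct $\II^{\cl}$ as $\Sym$ of an explicit linear cochain map that realizes the connecting homomorphism of a short exact sequence of cosheaves on $\Mbdy$. Fix once and for all a splitting $\sEb = \sL \oplus \sL^\perp$ of graded bundles and a cutoff $\chi \in C^\infty_c(\RRge)$ with $\chi(0) = 1$. For each open $V \subset \Mbdy$ and each $e \in \sL^\perp_c(V)$, define
\[
\tau(e) \;:=\; Q\bigl(e\,\chi(t)\bigr) \;-\; (Q_{L^\perp}\,e)\,\chi(t) \;=\; (Q_{rel}\,e)\,\chi(t) \;+\; e\,\chi'(t)\,\d t.
\]
A direct computation confirms that $\rho(\tau(e)) = Q_{rel}(e) \in \sL$, so $\tau(e) \in \condfieldscs(V \times \RRge)$, and that $Q\tau(e) = -\tau(Q_{L^\perp}e)$, which is the chain-map condition for a degree-zero map into the shifted complex $\condfieldscs[1]$. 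Because $\chi$ is chosen globally, the family $\{\tau_V\}$ is compatible with extension by zero and assembles into a map of cosheaves; applying $\Sym$ then yields $\II^{\cl}$, whose compatibility with the prefactorization structure is automatic since it is a morphism of symmetric algebras.

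To show $\II^{\cl}$ is a quasi-isomorphism, I would reduce to the linear statement by the standard filtration on polynomial degree and its spectral sequence. The key input is the short exact sequence of cochain complexes
\[
0 \longrightarrow \condfieldscs(V\times\RRge) \longrightarrow \sE_c(V\times\RRge) \xrightarrow{\;\pi_{L^\perp}\circ\rho\;} \sL^\perp_c(V) \longrightarrow 0,
\]
whose surjectivity is witnessed by the explicit lift $e \mapsto e\,\chi(t)$, and with respect to which $\tau$ is literally the shifted connecting homomorphism. Acyclicity of the middle term follows from the contracting homotopy
\[
H(e_0 + e_1\,\d t) \;:=\; -\int_{t}^{\infty} e_1(\,\cdot\,,s)\,\d s,
\]
which satisfies $[Q,H] = \mathrm{id}$ because $\Qb$ commutes with $t$-integration and this is already a contracting homotopy for the compactly supported de Rham complex of $\RRge$. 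The associated long exact sequence then exhibits $\tau$ as a linear quasi-isomorphism.

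The main obstacle is the functional-analytic upgrade: verifying that the short exact sequence and the homotopy $H$ interact well with the completed bornological tensor product used to define each $\Sym^k$, so that the linear quasi-isomorphism propagates to every symmetric power. This is precisely the role of the bulk-boundary analogue of the Atiyah--Bott lemma and the concrete identification of the tensor powers of $\condfieldscs$ provided in the appendix; once those are in hand, the propagation is a routine spectral-sequence argument.
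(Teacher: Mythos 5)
Part (1) matches the paper's argument exactly. For part (2), your linear map is in fact the same as the paper's: taking $\chi = 1-\Phi$ (compactly supported on $\RRge$, $\chi(0)=1$, $\chi'=-\phi$) turns $\tau(e) = (Q_{rel}e)\chi + (-1)^{|e|}e\,\chi'\,\d t$ into $-\bigl(e\wedge\phi\,\d t + (-1)^{|e|}(\Phi-1)Q_{rel}e\bigr)$, which is the paper's $\II^{\cl}$ up to an overall sign. Your route to the \emph{linear} quasi-isomorphism is genuinely different and clean: the paper writes down an explicit retraction $\PP^{\cl}$ and homotopy $\KK^{\cl}$ on $\condfieldscs$ itself, whereas you exhibit $\tau$ as the connecting map of the degreewise-split short exact sequence $0\to\condfieldscs\to\sE_c\to\sL^\perp_c\to 0$ with contractible middle term. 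That makes the cohomological statement transparent and the verification shorter, and all the pieces (the splitting $e\mapsto e\chi$, the homotopy $H$, the identification of the kernel) check out.

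The gap is exactly where you flagged it, and it is not ``routine.'' The filtration by polynomial degree reduces the claim to showing that $\Sym^k(\tau)$, formed with the completed bornological tensor product, is a quasi-isomorphism for every $k$ --- but a continuous quasi-isomorphism of LF-complexes does not in general survive completed tensor powers, and neither the appendix's identification of $(\condfieldscs[1])^{\hotimes k}$ nor the Atiyah--Bott-type lemma supplies the missing K\"unneth statement; the spectral sequence only restates the problem. This is precisely why the paper insists on producing a full deformation retraction $(\II^{\cl},\PP^{\cl},\KK^{\cl})$: a homotopy equivalence is preserved by any additive functor, so it extends to the completed symmetric powers by standard formulas. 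Fortunately your own ingredients already contain the fix: a degreewise-split short exact sequence whose middle term carries an explicit contraction $H$ yields, by the usual cone/perturbation argument, an explicit deformation retraction between $\condfieldscs(V\times\RRge)$ and $\sL^\perp_c(V)[-1]$ --- unwinding it essentially reproduces the paper's $\PP^{\cl}$ and $\KK^{\cl}$. You should carry out that assembly (checking that the resulting homotopy preserves the boundary and compact-support conditions) rather than appeal to a spectral sequence.
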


We will state now the quantum analogue of this theorem before turning to the proofs.

\begin{thm}
\label{thm: maingenlq}
For a free bulk-boundary field theory $(\sE,\sL)$, we have the following identifications:
\begin{enumerate}
\item Let $\Obq$ denote the factorization algebra on $\mathring M$ of quantum observables for $\sE$, constructed using the techniques of Chapter 4 of \cite{CG1}. Then, there is an isomorphism
\[
\Obq_{\sE,\sL}\big|_{\mathring M} \cong \Obq_\sE.
\]
\item There is a quasi-isomorphism
\begin{align}
\label{eq: qiq}
\II^{\q}: \Obq_\sL&\to \pi_* \Obq_{\sE,\sL}.
\end{align}
\end{enumerate}
\end{thm}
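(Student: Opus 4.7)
Part (1) is essentially automatic. For any open $U\subset\mathring M$, the intersection $U\cap\partial M$ is empty, so the boundary condition imposes no constraint and $\sE_{\sL,c}(U)=\sE_c(U)$. The BV Laplacian on $\Obq_{\sE,\sL}$ is defined via the bulk pairing $\ip$, which is the same pairing used to build the BV Laplacian on $\Obq_{\sE}$. Hence $\Obq_{\sE,\sL}(U)=\Obq_{\sE}(U)$ as differentiable cochain complexes, and this identification is compatible with the factorization structure maps.

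For Part (2), the plan is to construct $\II^{\q}$ as a quantum enhancement of $\II^{\cl}$ from Theorem~\ref{thm: maingenlcl}(2), then reduce the quasi-isomorphism to the classical case by a spectral sequence argument. On the linear level, fix $\psi\in C^\infty_c([0,\infty))$ with $\int_0^\infty\psi(t)\,\d t = 1$ and set $\Phi(t):=\int_0^t\psi(s)\,\d s -1$; this is compactly supported with $\Phi(0)=-1$. For $V\subset\partial M$ open and $e\in\sL^\perp_c(V)$, define
\[
\iota(e) = e\,\psi(t)\,\d t + (-1)^{|e|}(Q_{rel}e)\,\Phi(t) \in \sE_{\sL,c}(V\times[0,\infty))[1].
\]
The boundary restriction $\rho(\iota(e))=-(-1)^{|e|}Q_{rel}e\in\sL$ confirms this lies in the fields of the bulk-boundary system. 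A short computation using $\Phi'=\psi$ together with the identity $Q_L Q_{rel} = -Q_{rel}Q_{L^\perp}$ (a consequence of $Q_\partial^2=0$) yields $Q\circ\iota = \iota\circ Q_{L^\perp}$. Extend $\iota$ multiplicatively to symmetric algebras and then $\CC[\hbar]$-linearly to define $\II^{\q}$.

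The heart of the argument is the $\hbar^1$ identity
\[
\ip[\iota(e_1),\iota(e_2)]_M = \int_{\partial M}\ip[e_1,Q_{rel}e_2]_{loc,\partial} = \mu(e_1,e_2),
\]
up to the usual BV sign. Expanding the bulk pairing via $\ip_{loc}=\ip_{loc,\partial}\boxtimes\wedge$, the term $\ip[e_1\psi\,\d t,e_2\psi\,\d t]_M$ vanishes because $\d t\wedge\d t=0$, while $\ip[(Q_{rel}e_1)\Phi,(Q_{rel}e_2)\Phi]_M$ vanishes because $\sL$ is Lagrangian. The two surviving cross terms combine, using $\int_0^\infty\psi\Phi\,\d t=-\tfrac12$ and the skew-symmetry of $\ip_{loc,\partial}$, to produce exactly $\mu(e_1,e_2)$. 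Combined with multiplicativity and the Leibniz rule for the BV Laplacian, this shows that $\II^{\q}$ intertwines $Q_{L^\perp}+\hbar\Delta_\mu$ on the source with $Q+\hbar\Delta$ on the target.

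Finally, to deduce that $\II^{\q}$ is a quasi-isomorphism, filter $\Obq_\sL$ and $\pi_*\Obq_{\sE,\sL}$ by powers of $\hbar$. The $E_1$ pages are $\Obcl_\sL[\hbar]$ and $\pi_*\Obcl_{\sE,\sL}[\hbar]$, and the induced map is $\II^{\cl}\otimes\CC[\hbar]$, which is a quasi-isomorphism by Theorem~\ref{thm: maingenlcl}(2). Since the filtration is exhaustive and bounded below in each total degree, the spectral sequences converge and the conclusion follows. The main obstacle is the pairing identity above: it is the quantum reflection of the defining property of $\mu$ as the Green's form measuring the failure of $Q$-invariance of $\ip$ on sections with nonzero boundary values, and it is precisely this identity that forces the boundary quantization to acquire the central extension encoded by $\mu$---producing, for example, the Schwinger term in the abelian CS/WZW correspondence and its higher-dimensional analogues.
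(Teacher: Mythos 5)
Your proposal is correct and follows essentially the same route as the paper: part (1) is immediate from $\sE_{\sL}(U)=\sE(U)$ for $U$ away from $\partial M$, and part (2) reuses the linear map from the classical theorem (your $\Phi$ is the paper's $\Phi-1$, a harmless reindexing) and reduces everything to the cocycle identity $\ip[\II^{\cl}\alpha_1,\II^{\cl}\alpha_2]=\mu(\alpha_1,\alpha_2)$, verified by the same computation with the $\d t\wedge\d t$ and Lagrangian terms dropping out and the cross terms contributing via $\int\phi(\Phi-1)$. Your explicit $\hbar$-filtration/spectral-sequence step to upgrade the classical quasi-isomorphism is exactly the intended (and elsewhere-used) completion of the paper's argument.
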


\begin{rmk}
One consequence of this theorem is that the quantum boundary observables, for any choice of splitting $L^\perp$, are explicitly identified with $\pi_* \Obq_{\sE,\sL}$.
Hence we see again that the choice of splitting is irrelevant.
\end{rmk}

\begin{rmk}
Theorems \ref{thm: maingenlcl} and \ref{thm: maingenlq} are characterizations of the ``boundary value'' and the ``bulk value'' of the factorization algebras $\Obcl_{\sE,\sL}$, $\Obq_{\sE,\sL}$. However, the bulk-boundary factorization algebras contain more information than their bulk and boundary values alone---they also encode an action of the bulk observables on the boundary observables. This is a rich structure. For example, in the Poisson sigma model we believe the structure to be related to the formality quasi-isomorphism of Kontsevich \cite{KontPSM}. We study this action for topological mechanics and the Chern-Simons/chiral WZW system in Proposition \ref{prp: toplmech} and Lemma \ref{lem: cswzwonhalfline}, respectively.
\end{rmk}

We now turn to proving these theorems. 

\begin{proof}[Proof of classical theorem]
The first statement of the theorem follows immediately from the fact that $\condfields(U)=\sE(U)$ when $U\cap \del M=\emptyset$. 

It remains, therefore, to prove the second statement. 
Throughout the proof, let $U$ be an open subset of $\Mbdy$. 
Let us first construct the cochain map
\[
\II^{\cl}(U): \Obcl_\sL(U)\to \Obcl_{\sE,\sL}(U\times \RRge)
\]
for each open subset $U\subset M_\del$. 
To this end, let $\phi$ be a compactly-supported function on $\RRge$ whose integral over $\RRge$ is 1, and let $\Phi(t):=\int_0^t \phi(s)ds$. 
Both the boundary and bulk observables arise as symmetric algebras built on cochain complexes, 
so the map $\II^{\cl}$ will be induced from a cochain map on the linear observables. 

As a first step, we decompose the fields $\condfieldscs$ further. 
By hypothesis, we have the isomorphism
\[
\sE_{c}(U\times \RRge)\cong (\sEb)_{c}(U) \,\hotimes\, \Omega^\bullet_{\RRge,c}(\RRge).
\]
Recall that in the construction of the boundary observables, we have a decomposition
\[
\Qb=Q_L+Q_{L^\perp}+Q_{rel},
\] 
where $Q_L$ preserves $\sL$, $Q_{L^\perp}$ preserves $\sL^\perp$, and $Q_{rel}$ maps $\sL^\perp$ to $\sL$. We can therefore write
\[
\condfieldscs(U\times \RRge)\cong \left( \sL_{c}(U)\,\hotimes\, \Omega^\bullet_{\RRge,c}(\RRge)\rtimes \sL^\perp_{c}(U)\,\hotimes\, \dirforms(\RRge)\right),
\]
where $\dirforms(\RRge)$ is the cochain complex (concentrated in degrees 0 and 1)
\[
\begin{tikzcd}
\left\{f\in \Omega^0_{\RRge,c}(\RRge)\mid f(t=0)=0\right\}\ar[r,"d_{dR}"]&\Omega^1_{\RRge,c}(\RRge),
\end{tikzcd}
\]
and the symbol $\rtimes$ reminds us that $\sL^\perp$ is not a subcomplex of $\sEb$. 
Note that our boundary condition requires that only the $\sL^\perp$-valued fields vanish at the boundary. 

Define the map $\II^{\cl}(U): \sL^\perp_c(U)[-1]\to \condfieldscs(U\times \RR_{\geq 0})$ by 
\[
\II^{\cl}(U)(\alpha)=\alpha \wedge \phi \, \d t-(-1)^{|\alpha|}(\Phi-1)Q_{rel}\alpha,
\]
where $|\alpha|$ denotes the cohomological degree of $\alpha$ in $\sL^\perp$ (not $\sL^\perp[-1]$). 
The map $\II^{\cl}(U)$ is of cohomological degree zero because of the terms $\wedge \phi \,\d t$ and $Q_{rel}$. 
Moreover, $\II^{\cl}(U)(\alpha)$ does indeed have compact support if $\alpha$ does, since $(\Phi-1)(t)=0$ for $t>>0$. 
By construction $\II^{\cl}$ is a map of precosheaves. 
We also see that $\II^{\cl}(U)(\alpha)$ satisfies the boundary condition because 
\[
\rho\left( \II^{\cl}(U)(\alpha)\right)= (-1)^{|\alpha|}Q_{rel}\alpha
\]
and $Q_{rel}\alpha$ lives in $\sL_c$.
Finally, we check that $\II^{\cl}(U)$ is a cochain map:
on the one hand,
\[
\II^{\cl}(U)(Q_{L^\perp}\alpha) = (Q_{L^\perp}\alpha)\wedge \phi \, \d t-(-1)^{|\alpha|+1}(\Phi-1)Q_{rel}Q_{L^\perp}\alpha,
\]
and on the other,
\begin{align*}
Q\II^{\cl}(U)(\alpha)& = Q_{L^\perp}\alpha \wedge \phi \, \d t+ Q_{rel}\alpha \wedge \phi \, \d t\\
&-Q_{rel}\alpha \wedge \phi \, \d t -(-1)^{|\alpha|}(\Phi-1)Q_{L}Q_{rel}\alpha.
\end{align*}
Once one uses the relation $Q_L Q_{rel}=-Q_{rel}Q_{L^\perp}$, 
one sees that the two expressions are equal. 
Since $\II^{\cl}(U)$ respects the differentials on the complexes as well as the extension maps, 
it extends to a map of factorization algebras $\Obcl_\sL\to \pi_*\Obcl_{\sE,\sL}$.

It remains to show that $\II^{\cl}(U)$ is a quasi-isomorphism. 
We will exhibit, in fact, something much stronger: a deformation retraction.
Namely, we will produce a cochain map $\PP^{\cl}(U)$ such that $\PP^{\cl}(U)\II^{\cl}(U)=\id$ and 
a cochain homotopy $\KK^{\cl}(U)$ between $\II^{\cl}(U)\PP^{\cl}(U)$ and the identity~$\id$. 

To this end, consider the map
\[
\PP^{\cl}(U): \condfieldscs(U\times \RRge) \to \sL^\perp_c(U)[-1]
\]
where
\[
\PP^{\cl}(U)(e) =p_{L^\perp}\left( \int_{\RRge} e \right)
\]
and where $p_{L^\perp} \colon \sEb\to \sL^\perp$ is the canonical map induced by the quotient bundle map $\Eb \to L^\perp$.
Notice that 
\begin{align*}
\PP^{\cl}(U)\left(Q_{L^\perp}e+Q_{L}e+Q_{rel}e+(-1)^{|e|}\frac{de}{dt}\wedge \d t \right)
&=Q_{L^\perp} p_{L^\perp} \int_{\RRge}e+(-1)^{|e|}p_{L^\perp}\int_{\RRge}\frac{de}{dt}\wedge \dt
\\
&=Q_{L^\perp} p_{L^\perp} \int_{\RRge}e\\
&=Q_{L^\perp}\PP^{\cl}(U)(e),
\end{align*}
where the second equality holds because $e$ is compactly supported and $p_{L^\perp} e(0)=0$. 
Hence it is a cochain map.
Direct computation verifies that $\PP^{\cl}(U)\II^{\cl}(U)=\id$.

Consider now the degree --1 map
\[
\KK^{\cl}(U): \condfieldscs(U\times \RRge)\to \condfieldscs(U\times \RRge)
\]
where
\[
\big(\KK^{\cl}(U)(e)\big)(t)= (-1)^{|e|-1}(\Phi(t)-1)(\PP^{\cl}(U)(e))-(-1)^{|e|}\int_t^\infty e(s).
\]
The field $\KK^{\cl}(U)(e)$ satisfies the required boundary condition because 
\[
\KK^{\cl}(U)(e)(0)=(-1)^{|e|}p_{L^\perp}\int_{\RRge}e-(-1)^{|e|}\int_{\RRge}e
\]
and hence $\KK^{\cl}(U)(e)(0)$ is an element of $\sL$.
Direct computation shows that $\KK^{\cl}(U)$ is a cochain homotopy between $\II^{\cl}(U)\PP^{\cl}(U)$ and the identity. 

Just as $\II^{\cl}(U)$ extends to a map of symmetric algebras, extend $\KK^{\cl}(U)$ and $\PP^{\cl}(U)$ to maps
\begin{align*}
\KK^{\cl}(U)&:\Sym(\condfieldscs[1](U\times \RRge))=\left(\pi_*\Obq_{\sE,\sL}\right)(U)\to \left(\pi_*\Obq_{\sE,\sL}\right)(U)\\
\PP^{\cl}(U)&: \Obq_{\sE,\sL}(U)\to \Sym(\sL^\perp_c(U)) = \Obcl_\sL(U)
\end{align*}
by the usual procedure extending a deformation retraction at the linear level to symmetric powers. 
(One treatment with the necessary formulas is Section 2.5 of~\cite{othesis}.)
\end{proof}

Proving the quantum theorem is a modest modification of the classical argument.

\begin{proof}[Proof of quantum theorem]
The first statement of the theorem again follows immediately from the fact that $\condfields(U)=\sE(U)$ when $U\cap \del M=\emptyset$. 

It remains, therefore, to prove the second statement,
using the constructions from the proof of the classical theorem.
Throughout the proof, let $U$ be an open subset of $\Mbdy$. 
Recall that the cocycle $\mu$ determines $\Obq_\sL$ and the cocycle $\ip$ determines $\Obq_{\sE,\sL}$. 
We will show that $\II^{\cl}(U)$ respects the cocycles and hence determines the desired map $\II^q$ between the quantized factorization algebra.
In particular, we must show that
\[
\mu(\alpha_1,\alpha_2)=\ip[\II^{\cl}\alpha_1,\II^{\cl}\alpha_2].
\]
To see this, compute 
\begin{align*}
\ip[\II^{\cl}\alpha_1,\II^{\cl}\alpha_2]&= - \int_{\Mbdy\times \RRge} \ip[\alpha_1,Q_{rel}
\alpha_2]_{loc, \del}\phi (1-\Phi) \, \d t - (-1)^{|\alpha_1|}\int_{\Mbdy\times \RRge} \ip[Q_{rel}\alpha_1,
\alpha_2]_{loc, \del}\phi (\Phi-1) \, \d t\\
&= 2\mu(\alpha_1,\alpha_2)\int_{\RRge}\phi(1-\Phi)\, \d t.
\end{align*}
Since $\frac{d}{dt}(\Phi-1)=\phi$, we find
\[
\int_{\RRge}\phi(1-\Phi)= \int_{u=0}^1 u \, \d u = \frac{1}{2},
\]
which verifies that $\II^{\cl}$ is a cochain map at the level of quantum observables, as needed.
\end{proof}

\section{Applications}
\label{sec: apps}

In this section, we apply our main theorems to several bulk-boundary field theories,
namely the examples already mentioned in Section~\ref{sec: examples}.
In the low-dimensional examples, we can relate the factorization algebras to more familiar objects, 
such as associative algebras and vertex algebras.
For instance, in the example of topological mechanics, we find that our procedure is equivalent to the canonical quantization of the algebra $\sO(V)$ (on the ``bulk'' line) and its Fock space (on the boundary point).

In higher dimensions, our factorization algebras recover familiar phenomena when using simple product spaces and performing compactifications ({\em aka} pushforwards). For example, on a slab of the form $N\times [0,1]$ with $N$ an oriented 2-manifold, the CS/WZW system is shown to be equivalent to the free massless scalar boson on $N$ (see Lemmas \ref{lem: fullwzwscalarcl} and~\ref{lem: fullwzwscalarq}).

One payoff of our approach is that we get nontrivial constructions when we use interesting manifolds with boundary,
thanks to factorization homology.
For instance, in the example of the Poisson sigma model, we find that the global observables depend in an interesting way on the genus and the number of boundary components of the surface.
Similarly, Koszul duality makes an appearance by pointing transverse boundary conditions on either side of a strip (cf. \cite{Shoikhet, calaque_felder_ferrario_rossi_2011}), 
although we do not go far in this direction here.

\subsection{Topological mechanics}
\label{subsec: toplmech}

In this subsection, we study the factorization algebras for topological mechanics with values in $V$ and with boundary condition $L$. We will see that the factorization algebra of classical bulk-boundary observables encodes the commutative algebra $\Sym(V)$ together with the module $\Sym(V/L)$. For the quantum observables, we obtain the Weyl algebra $W(V)$ and the Fock module $F(L)$ built on $L$. (We define these objects in the sequel.)

Recall that a  symplectic vector space $(V,\omega)$ together with a Lagrangian subspace $L\subset V$ define a free bulk-boundary system on $[0,\epsilon)$, which we call topological mechanics (cf. Example~\ref{ex: toplmech}). 
(We can take $V$ to be $\ZZ$-graded, if we like, but of bounded total dimension.)
The main theorem \ref{thm: maingenlcl} identifies $\left.\Obcl_{\sE,\sL}\right|_{(0,\epsilon)}$ with the factorization envelope on $(0,\epsilon)$ of the abelian Lie algebra $V$. 
Proposition 3.4.1 of \cite{CG1} shows that this factorization algebra is equivalent to the locally constant factorization algebra on $(0,\epsilon)$ corresponding to the associative algebra $\sO(V):=\Sym(V^\vee)$. 
Similarly, $\left.\Obq_{\sE,\sL}\right|_{(0,\epsilon)}$ is equivalent to the factorization algebra on $(0,\epsilon)$ corresponding to the Weyl algebra $W(V)$.
(Recall that the Weyl algebra is the algebra generated by $V$ and $\hbar$ and subject to the relation $v_1v_2-v_2v_1=\omega(v_1,v_2)\hbar$.) 

The main theorems also identify the bulk-boundary observables $\Obcl_{\sE,\sL}([0,\delta))$ and $\Obq_{\sE,\sL}([0,\delta))$ with the boundary observables
\[
\Obcl_{\sL}\cong \Sym(V/L)
\]
and
\[
\Obq_{\sL}\cong \Sym(V/L)[\hbar],
\]
respectively, for any $\delta\leq\epsilon$. The second isomorphism arises from the fact that $Q_{rel}=0$. 

These identifications are purely identifications of factorization algebras on $\{0\}$; they do not take into account the actions of $\Obcl_\sE$ and $\Obq_\sE$ on the boundary observables. In this subsection, we show how the bulk and boundary observables interact through the bulk-boundary factorization algebras $\Obcl_{\sE,\sL}$ and $\Obq_{\sE,\sL}$. 
Namely, we will examine the structure maps involving one or more intervals including the boundary point.
These structure maps will give the boundary observables the structure of a right module over the corresponding algebras in the bulk.

More precisely, given an algebra $A$ and a pointed right module $M$ of $A$, there is a stratified locally constant factorization algebra $\cF_{A,M}$ on $[0,\epsilon)$ which assigns $A$ to any open interval, and $M$ to any half-closed interval (cf. \S 3.3.1 of \cite{CG1}). We will show that the cohomology factorization algebras $H^\bullet\Obcl_{\sE,\sL}$ and $H^\bullet \Obq_{\sE,\sL}$ will be of this form for particular choices of $A$ and $M$. We have already discussed that the corresponding algebras are $\sO(V)$ and $W(V)$ for the classical and quantum observables, respectively. It remains only to identify the relevant modules.

The Lagrangian $L\subset V$ determines a (right) module for the commutative algebra $\sO(V)$, namely $\sO(V/L)$ with the module structure induced from the restriction map. 
Similarly, $L$ determines a right module $F(L)$ for the Weyl algebra, namely the quotient of $W(V)$ by the right-submodule generated by $L$. The underlying vector space for $F(L)$ is $\Sym(V/L\oplus \hbar)$. Having established all the relevant notation, we can now state the main proposition.

\begin{prp}
\label{prp: toplmech}
For $(\sE,\sL)$ corresponding to topological mechanics of Example~\ref{ex: toplmech}, 
there is a quasi-isomorphism of factorization algebras on~$\RR_{\geq 0}$
\[
\Obcl_{\sE,\sL} \xto{\simeq} \cF_{\sO(V),\sO(V/L)}
\]
from classical observables to the stratified locally constant one associated to functions $\sO(V)$ and the module $\sO(V/L)$.
Likewise, there is a quasi-isomorphism of factorization algebras
\[
\Obq_{\sE,\sL} \xto{\simeq} \cF_{W(V),F(L)}
\]
from the quantum observables and the stratified locally constant one associated to the Weyl algebra $W(V)$ and the Fock module~$F(L)$.
\end{prp}

Note that this proposition says that the cohomology of the observables is wholly concentrated in degree zero,
so that the observables are determined precisely by the usual information in mechanics.

\begin{proof}
It is straightforward to verify that the factorization algebras of both the classical and quantum bulk-boundary observables are stratified locally constant with respect to the stratification $\{0\}\subset [0,\epsilon)$. 
Hence, each factorization algebra corresponds to some pair $(A,M)$. 
We need only to determine the modules living on the boundary. 
To this end, let $I_1=(0,\epsilon)$ and $I_2=[0,\epsilon)$. 
Consider the structure maps for the inclusion $I_1\subset I_2$. Let $A$ stand momentarily for either of $\sO(V)$, $W(V)$, and similarly let $M$ stand for either of the two modules on the boundary. The structure map $m_{I_1}^{I_2}$ induces a map $A\to M$. The associativity axiom of a prefactorization algebra guarantees that this is a map of $A$ modules.

Recall from the proof of Theorem~\ref{thm: maingenlcl} that the map $\II^{\cl}$ is induced from a choice $\phi$ of compactly-supported function on $I_2$ whose total integral is 1. Let us suppose that $\phi$ is supported on $I_1$. Then, we have a quasi-isomorphism
\[
\II^{\cl}_{int}: V\to \condfieldscs(I_1)[1]
\]
where
\[
\II^{\cl}_{int}(v) = \phi \, \d t\otimes v.
\]
The symmetrization of this map, which we also denote by $\II^{\cl}_{int}$, induces a quasi-isomorphism 
\[
\II^{\cl}_{int}:\Sym(V) \to \Obcl_{\sE,\sL}(I_1).
\]
Consider the composite map
\[
\xymatrix{
\Sym(V)\ar[r]^-{\II^{\cl}_{int}}& \Obcl_{\sE,\sL}(I_1)\ar[r]^-{m_{I_1}^{I_2}}&\Obcl_{\sE,\sL}(I_2)\ar[r]^-{\PP^{\cl}}&\Sym(V/L),
}
\]
where $\PP^{cl}(I_2)$ is introduced in the proof of Theorem \ref{thm: maingenlcl}. It follows directly from the definitions that the composite is the map $\Sym(V)\to \Sym(V/L)$ induced from the projection $V\to V/L$. The statement of the proposition for the classical observables follows.

We now ``perturb'' the classical information. 
We would like to understand the structure map 
\[
m_{I_1}^{I_2}:\Obq_{\sE,\sL}(I_1)\to \Obq_{\sE,\sL}(I_2)
\] 
at the level of cohomology. 
We know that the cohomology of $\Obq_{\sE,\sL}(I_1)$ is the underlying vector space of $W(V)$, and the cohomology of $\Obq_{\sE,\sL}(I_2)$ is $\Sym(V/L)[\hbar]$, which is  the underlying vector space of a module $M$ for $W(V)$. 
The structure map $m_{I_1}^{I_2}$ induces a map $T: W(V)\to M$ which intertwines the right $W(V)$ actions. 
Because $\Obq_{\sE,\sL}$ is filtered by powers of $\hbar$, and because the associated graded factorization algebra is $\Obcl_{\sE,\sL}\otimes_{\CC}\CC[\hbar]$, $T$ is surjective. 
Hence, to understand $M$, we simply need to identify the kernel of $T$. In the proof of Theorem \ref{thm: maingenlcl}, we constructed maps $\II^{cl}(I_2),\PP^{cl}(I_2),\KK^{cl}(I_2)$ which fit into a deformation retraction. 
Hence, the homological perturbation lemma (see, e.g., \cite{crainic}) gives a formula for a quasi-isomorphism 
\[
\PP^{\q}: \Obq_{\sE,\sL}(I_2)\to \Sym(V/L)[\hbar].
\] 
On the sub-complex $\sE_{\sL,c}(I_2)[1]\subset \Obq_{\sE,\sL}(I_2)$, $\PP^q$ agrees with $\PP^{\cl}$. 
Moreover, as demonstrated in \cite{CG1}, the map 
\[
\xymatrix{
V\ar[r]^-{\II^{\cl}_{int}} &\sE_{\sL,c}(I_1)[1]\ar@{^{(}->}[r] &\Obq_{\sE,\sL}(I_1)
}
\]
induces the canonical map $V\to W(V)$ on cohomology. 
Finally, tracing through the definitions, the composite 
\[
\xymatrix{
V\ar[r]^-{\II^{\cl}_{int}} &\sE_{\sL,c}(I_1)[1]\ar@{^{(}->}[r] &\Obq_{\sE,\sL}(I_1)\ar[r]^-{m_{I_1}^{I_2}}&\Obq_{\sE,\sL}(I_2)\ar[r]^-{\PP^{\q}}&M
}
\] 
is seen to be the quotient map $V\to V/L$ followed by the inclusion $V/L\to \Sym(V/L)[\hbar]$. Thus, $L$ is in the kernel of $T$, and hence so too is the whole submodule of $W(V)$ generated by $L$. For dimension reasons, this implies that $M\cong F(L)$.  
\end{proof}

\subsection{Free Poisson sigma model}

We examine Example~\ref{ex: psm} and explore some interesting consequences.
For instance, we show that our results recover naturally --- for this simple class of Poisson spaces and coisotropic submanifolds ---  the well-known Swiss cheese algebras.
We then turn to a discussion of Koszul duality, 
and finally to what happens with higher genus surfaces.

\subsubsection{The boundary observables}

Recall from Example~\ref{ex: psm} that a vector space $V$ and a skew-symmetric linear map $\Pi: V^\vee\to V$ determine a field theory on any oriented surface $\Sigma$ with boundary. The (underlying graded vector) space of fields of this theory is 
\[
\Omega_\Sigma^\bullet \otimes V^\vee [1]\oplus \Omega_\Sigma^\bullet\otimes V.
\] 
The differential is of the form $(\d_{dR}\otimes \id)\oplus (\id \otimes \Pi)$ (the first term preserves the decomposition above, and the second term maps the first summand to the second). In particular, one obtains a field theory on the upper half-plane $\Sigma = \HH$. For the choice of Lagrangian $\sL_0= \Omega^\bullet_{\RR}\otimes V$, we have $\sL_0^\perp= \Omega^\bullet_\RR\otimes V^\vee[1]$. 
The following lemma is a straightforward application of Proposition 3.4.1 of~\cite{CG1}.

Recall our notational convention: given an associative algebra $A$, let $\cF_A$ denote the locally constant factorization algebra on $\RR$ constructed from $A$ (cf. the first example in Section 3.1.1 of~\cite{CG1}).

\begin{lmm}
\label{lmm:bdryL0}
For $\sL_0$ as defined just above, 
there is a quasi-isomorphism of factorization algebras on~$\RR$
\[
\Obcl_{\sL_0} \xto{\simeq} \cF_{\Sym(V^\vee)}
\]
for classical observables and a quasi-isomorphism of factorization algebras
\[
\Obq_{\sL_0} \xto{\simeq} \cF_{(\Sym(V^\vee)[\hbar],\ast)},
\]
where $\ast$ refers to the Kontsevich star product on $\Sym(V^\vee)[\hbar]$. 
The product is characterized by the relation
\[
\nu_1\ast\nu_2-\nu_2\ast\nu_1 = \hbar \nu_1(\Pi\nu_2),
\]
where $\nu_1,\nu_2\in V^\vee$. 
\end{lmm}

In our situation the dual space $V^\vee$ can be decomposed into a direct sum of a vector space $V_t^\vee$ with a trivial pairing and a vector space $V_s^\vee$ with a nondegenerate (i.e., symplectic) pairing.
Thus the quantum observables corresponds to a tensor product of a commutative algebra $\Sym(V_t^\vee)$ and a Weyl algebra~$W(V_s)$.
It is thus natural to analyze just these two cases since the general answer can be assembled from them.

\begin{rmk}
One can define the boundary observables $\Obq_{\sL_0}$ without making any reference to the theory on $\HH$ for which $\sL_0$ is a boundary condition. 
Theorem~\ref{thm: maingenlq} tells us that these observables are equivalent to the pushforward of the coupled bulk-boundary observables. 
When $V$ ceases to be a linear Poisson manifold, there is no direct definition of $\Obq_{\sL_0}$, and in fact the quantization of $\Sym(V^\vee)$ produced in \cite{KontPSM} requires in an essential way the study of the theory on $\HH$. 
We expect that once one constructs the factorization algebra of quantum observables for the (interacting) Poisson sigma model on $\HH$, its pushforward to $\RR$ recovers the algebra $\Sym(V^\vee)[\hbar]$ with the Kontsevich star product. 
In fact, we expect that one can apply a similar procedure to any theory with a shifted Poisson structure, using the so-called ``universal bulk theory''~\cite{ButsonYoo}.
\end{rmk}

As a special case, when $\Pi$ is zero, the quantum observables correspond to the commutative algebra generated by $V^\vee$ and $\hbar$; 
moreover, a new boundary condition becomes available. 
Namely, we take $\sL_1= \Omega^\bullet_\RR \otimes V^\vee[1]$, so that $\sL^\perp_1 = \Omega^\bullet_\RR\otimes V$. We have a similar lemma.

\begin{lmm}
There are quasi-isomorphisms 
\begin{align*}
\Obcl_{\sL_1}&\xto{\simeq} \cF_{\Lambda^\bullet V}\\
\Obq_{\sL_1}& \xto{\simeq} \cF_{(\Lambda^\bullet V)[\hbar]};
\end{align*}
where we use $\Lambda^\bu V$ to denote the free graded algebra generated by $V$ in degree~$1$.
\end{lmm}

\subsubsection{The bulk algebra and Swiss cheese-type structures}

On half-space $\HH$ our bulk-boundary factorization algebra is locally constant with respect the stratification $\partial \HH \subset \HH$.
By \cite{higheralgebra, AyaFraTan} it thus encodes an algebra over the Swiss cheese operad.
It is interesting to ask whether we recover the expected Swiss cheese algebra,
and it is easy to see that we do for the classical observables.
Let $\cO_\Pi$ denote the dg commutative algebra consisting of the polyvector fields $\Sym(V^\vee \oplus V[-1])$ equipped with the differential~$[\Pi, -]$.

\begin{lmm}
There is a natural quasi-isomorphism
\[
\Obcl_\sE \xto{\simeq} \cF_{\cO_\Pi}
\] 
of factorization algebras on $\mathring \HH$ where $\cF_{\cO_\Pi}$ denotes the locally constant factorization algebra on $\mathring \HH$ associated to the dg commutative algebra~$\cO_\Pi$.
\end{lmm}

In other words, the classical observables encode precisely the desired dg commutative algebra.

\begin{proof}
We start by verifying the claim when $\Pi = 0$ and then explain how to deform to the general case.
Consider the canonical inclusion of the constant sheaf $\CC$ into the de Rham complex $\Omega^\bullet_{\mathring \HH}$,
which induces a canonical quasi-isomorphism of sheaves
\[
i: V \oplus V^\vee[1] \hookrightarrow \Omega^\bullet_{\mathring\HH} \otimes (V \oplus V^\vee[1]).
\]
This map determines a map on cosheaves of dg commutative algebras
\[
i^*: \Obcl_{\Pi = 0} \to \cO_{\Pi = 0},
\]
and hence the desired quasi-isomorphism.
Turning on $\Pi$, we deform the differential on the polyvector fields,
which also deforms the differential on the classical observables,
but this deformation is manifestly $\Omega^\bullet$-linear, 
as it only depends on the target space and not on the source.
\end{proof}

This quasi-isomorphism is compatible with the quasi-isomorphism of Lemma~\ref{lmm:bdryL0}:
the quotient map $\cO_\Pi \to \Sym(V^\vee)$ fits in a commuting square with the structure map $\Obcl_{\Pi = 0}(\RR^2) \to \Obcl_{\sL_0}(\RR^1)$ determined by a disk in the bulk sitting inside a semi-disk intersecting the boundary.

We now turn to the quantum case.
Here a comparison is a bit more hairy, 
because it is more complicated to compare an $E_2$ algebra to a $P_2$ (or Gerstenhaber) algebra.
Even worse, our construction produces explicitly a locally constant factorization algebra on $\RR^2$,
and it requires work to unpack its associated $E_2$ algebra.
As a step in this direction, we note that the underlying cochain complex of the quantum observables on a disk is quasi-isomorphic to~$\cO_\Pi[\hbar]$.
Indeed, the quasi-isomorphism for the classical observables carries over immediately:
it remains a quasi-isomorphism at the quantum level because the BV Laplacian is nontrivial only on elements that are annihilated by the map.

\begin{lmm}
The natural quasi-isomorphism $\Obcl_\sE(\RR^2) \xto{\simeq} \cO_{\Pi}$ quantizes to a quasi-isomorphism $\Obq_\sE(\RR^2) \xto{\simeq} \cO_{\Pi}[\hbar]$ of cochain complexes. 
\end{lmm}

We do not verify here that the expected Gerstenhaber structure appears under this identification,
as it would require developing machinery orthogonal to the efforts of this paper.
We remark, however, that so long as $\Pi \neq 0$, 
the quantum observables determine a nontrivial $E_2$ deformation of $\Obcl_\sE$.
This fact can be seen by examining the dimensional reduction of the linear Poisson sigma model from $\RR \times S^1$ to $\RR$;
the quantum observables there have a nontrivial commutator.

\subsubsection{A case of Koszul duality}

By combining the boundary conditions in the case $\Pi = 0$, 
we get an appealing view on the archetypal Koszul duality between the algebras $\Sym(V^\vee)$ and $\Lambda^\bu V$.
Consider the Poisson sigma model on the slab $\RR\times [0,1]$ with the boundary conditions $\sL_0$ at $t=0$ and $\sL_1$ at $t=1$,
and let $p: \RR\times [0,1] \to \RR$ denote the canonical projection,
as in Figure~\ref{fig:koszulstrip}. 
We will denote this total boundary condition by~$\sL$. 

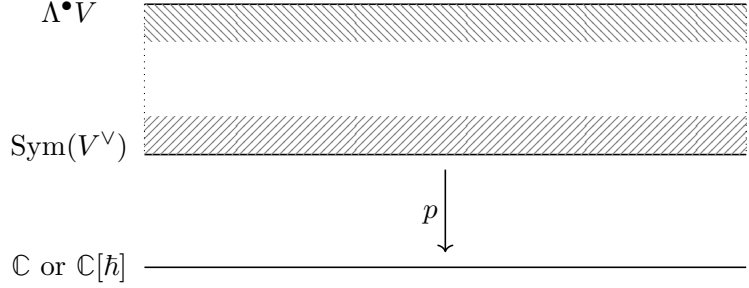
\begin{figure}

\begin{tikzpicture}
\draw[semithick](-4,1) -- (4,1);
\fill[pattern=north west lines, pattern color=gray] (-4,1) rectangle (4,0.5);
\draw[semithick](-4,-1) -- (4,-1);
\fill[pattern=north east lines, pattern color=gray] (-4,-.5) rectangle (4,-1);
\draw[dotted](-4,1) -- (-4,-1);
\draw[dotted](4,1) -- (4,-1);
\draw[->,semithick] (0,-1.2) -- (0,-2.3);
\draw[semithick](-4,-2.5) -- (4,-2.5);
\node at (-.2,-1.8){$p$};
\node at (-5,-.9){$\Sym(V^\vee)$};
\node at (-5,.9){$\Lambda^\bu V$};
\node at (-5,-2.5){$\CC$ or $\CC[\hbar]$};
\end{tikzpicture}
\caption{Strip equipped with transverse boundary conditions}
\label{fig:koszulstrip}
\end{figure}

Something interesting happens here: the composite system looks trivial,
so that the observables are the unit factorization algebra. 

\begin{lmm}
\label{lmm: pairing}
For the Poisson sigma model with $\Pi=0$ on $\RR\times [0,1]$ with boundary condition $\sL$, 
we have an equivalence of factorization algebras 
\[
p_* \Obcl_{\sE,\sL}\simeq \CC
\]
on the real line,
where $\CC$ is the factorization algebra which assigns $\CC$ to all open subsets~$U\subset \RR$.
Likewise,
\[
p_* \Obq_{\sE,\sL}\simeq \CC[\hbar],
\]
where $\CC[\hbar]$ also denotes the locally constant factorization algebra.
\end{lmm}

The essential reason for this result is that the Lagrangian boundary conditions are transverse,
so that the only solution is trivial.
A careful proof is below.

But the geometry here encodes interesting relations on the algebraic structures.
\begin{itemize}
\item The inclusion $\RR \times [0,t) \hookrightarrow \RR\times [0,1]$ leads to a structure map of factorization algebras 
from the boundary classical observables to the global observables;
in terms of algebras, it corresponds to the augmentation $\Sym(V^\vee) \to \CC$ corresponding to the quotient by the ideal~$(V^\vee)$.
This augmentation quantizes to an augmentation $\Sym(V^\vee)[\hbar] \to \CC[\hbar]$.
\item The inclusion on the other boundary component encodes the standard augmentation $\Lambda^\bullet V \to \CC$, for the classical observables, and the analog over $\CC[\hbar]$ for the quantum observables. 
\item Consider now the inclusion 
\[
\RR \times \left( [0,t) \sqcup (1-t,1] \right) \hookrightarrow \RR\times [0,1].
\]
The structure map of factorization algebras here encodes a nontrivial pairing of algebras
\[
q: \Sym(V^\vee) \otimes \Lambda^\bu V \to \CC 
\]
whose restriction to each factor is the standard augmentation.
The same holds in the quantum case.
\end{itemize}
All these maps are essentially induced by the inclusion of the empty open set,
which gives a distinguished line inside the observables on each open.

The pairing $q$ witnesses the algebras as Koszul dual,
because the (derived) hom-tensor adjunction yields a canonical map
\[
\Lambda^\bu V \to \RR \End_{\Sym(V^\vee)}(\CC, \CC) =: \Sym(V^\vee)^{!}
\]
and it is a quasi-isomorphism here.
(See Section 5.2.5 of~\cite{higheralgebra} for a careful treatment, or~\cite{LurieLecture} for a beautiful exposition,
of Koszul duality along these lines.)  

\begin{rmk}
We find this approach complementary to prior work connecting Koszul duality to deformation quantization \cite{Shoikhet, calaque_felder_ferrario_rossi_2011}.
It also provides a concrete, physical example of the general, abstract approach to Koszul duality via factorization algebras \cite{AyaFraPK, Matsuoka2015}.
For a further and deeper discussion of how Koszul duality fits into physics, particularly topological field theories,
see Appendix A of \cite{CP}.
For a connection with holography, see~\cite{CostelloLiSUGRA}.
\end{rmk}

\begin{proof}[Proof of Lemma~\ref{lmm: pairing}]
Given an open subset $U\subset \RR$, we construct a contracting homotopy for 
\[
\condfieldscs(U\times [0,1])[1].
\]
The space of fields is 
\[
\sE=\Omega^\bullet_{\RR\times[0,1]}\otimes V^\vee[1]\oplus \Omega^\bullet_{\RR\times[0,1]}\otimes V.
\]
Let us write a general field in the form $A+B$, where $A$ and $B$ lie in the first and second summands of $\sE$. 
Let $t$ denote the coordinate on $[0,1]$, and let $\iota_0$ and $\iota_1$ denote the inclusions $\RR\subset \RR\times[0,1]$ at the $t=0$ and $t=1$ coordinates, respectively. 
The boundary condition $\sL$ requires $\iota_0^* A=0$, $\iota_1^* B=0$. 
Consider fields of the type $A=\alpha\otimes \nu$, $B=\beta\otimes \mu$, where $\alpha\in \Omega^\bullet_{\RR}\otimes V^\vee[1]$, $\beta\in \Omega^\bullet_{\RR}\otimes V$, $\nu,\mu\in \Omega^\bullet_{[0,1]}$.
Let $\eta_0$ denote the degree --1 endomorphism of the de Rham forms $\Omega^\bullet_{[0,1]}$ which takes a one-form $\nu$ to the unique anti-derivative of $\nu$ which vanishes at $t=0$. 
Similarly, define $\eta_1$ to be the anti-derivative which vanishes at $t=1$.  For each open subset $U\subset \RR$, define the map
\[
K: \condfieldscs(U\times [0,1])\to \condfields(U\times [0,1])[-1]
\]
as follows:
\begin{align*}
K(\alpha\otimes \mu)&= (-1)^{|\alpha|}\alpha\otimes \eta_0(\mu)\\
K(\beta\otimes \nu)&= (-1)^{|\beta|}\beta\otimes \eta_1(\nu).
\end{align*}
With the domain and codomain written as above, $K$ is a degree 0 map; it can also be understood as a degree --1 map from $\condfieldscs$ to itself.
Here, we note that a general field $A+B$ cannot be written literally in the form $\alpha\otimes \mu+\beta\otimes \nu$, or even as a finite sum of such elements; however, the formulas for $K$ written above specify $K$ on the full (completed bornological) tensor product uniquely. Another way to interpret the above formulas is that $K$ acts as $1\otimes \eta_0$ on the $A$ fields and $1\otimes \eta_1$ on the $B$ fields. 
One verifies readily that $d_{dR}K+Kd_{dR}=\id$, so that $\condfieldscs(U\times[0,1])$ is acyclic for all $U$. The lemma follows.
\end{proof}

\begin{rmk}
We expect that the analogous arguments for 2-dimensional BF theory give the Koszul duality between, on the one hand, the factorization algebra corresponding to~$U\fg$ and, on the other hand, the factorization algebra corresponding to~$C^\bullet (\fg)$.
\end{rmk}

\subsubsection{The structure of the global sections}

We now consider the Poisson sigma model on a compact, connected, oriented surface $M$ with non-empty boundary $\del M$. 
Let $M$ have genus $g$ and $b>0$ boundary components. 
Our main result is the following.

\begin{lmm}
\label{lem: psmconformalblocks}
For the Poisson sigma model with boundary condition $\sL_0$, 
the cohomology of the global quantum observables 
\[
H^\bullet \left(\Obq_{\sE,\sL_0}(M)\otimes_{\RR[\hbar]} \RR_{\hbar =1}\right)
\] 
has rank 1 over $\RR$ and is concentrated in degree~$-2g(\dim \ker \Pi)-b\dim V$.
\end{lmm}
 
\begin{rmk}
\label{rmk: states} 
In all observable complexes appearing in this remark, we tacitly set $\hbar = 1$ but omit this from the notation.
(More accurately, we perform the tensor product $- \otimes_{\RR[\hbar]}\RR_{\hbar=1}$ for all observable complexes.) 
We begin by making two claims whose proofs are straightforward:
\begin{enumerate}
\item The cohomology $H^\bullet (\Obq_{\sL_0}(\del M))$ of the boundary observables is the $b$-fold tensor product of the Hochschild homology of the algebra $A:=(\Sym(V^\vee), \ast_{\hbar=1})$ with itself. 
\item The Hochschild homology of $A$ is concentrated in degrees 
\[
-\dim V, \cdots , -(\dim V - \dim \ker \Pi).
\]
\end{enumerate} 
Choose a tubular neighborhood $T$ of $\del M$. 
The structure map for the inclusion $T\subset M$ induces a map
\[
\tau: H^\bullet(\Obq_{\sL_0}(\del M))\cong H^\bullet(\Obq_{\sE,\sL_0}(T)) \to H^\bullet(\Obq_{\sE,\sL_0}(M)).
\]
Therefore, we find that the domain of $\tau$ is concentrated in negative degrees $\geq -b\dim V$,
while the codomain is concentrated in degree~$-2g(\dim \ker \Pi)-b\dim V$.
There are a number of cases to consider.
First, for $g\neq 0$ and $\dim \ker \Pi\neq 0$, the map $\tau$ is manifestly trivial, simply because its domain and codomain are concentrated in different cohomological degrees. 
However, when $g=0$ or $\dim\ker \Pi=0$, the map $\tau$ could be non-trivial, since its domain is concentrated in negative degrees $\geq -b\dim V$ and its codomain is concentrated in degree~$-b\dim V$.
Since the Hochschild homology of $A$ is a refinement of the quotient $A/[A,A]$, 
one should understand $\tau$ as defining a (possibly trivial) trace on $A$.
Therefore, $\tau$ is a sort of state of the quantum mechanical system on $\RR$ defined by~$A$.
We expect this trace to be non-zero exactly when $\Pi$ is non-degenerate.
\end{rmk}

\subsubsection{Proof of Lemma \ref{lem: psmconformalblocks}}

We prove Lemma~\ref{lem: psmconformalblocks} in steps.
First, recall the form of the cohomology and relative cohomology of surfaces with boundary. 

\begin{lmm}
The absolute and relative cohomology groups of $M$ and $(M,\del M)$ with coefficients in $\RR$ are
\[
H^\bullet(M) \cong \left \{
\begin{array}{lr}
\RR & \bullet =0\\
\RR^{2g+b-1} & \bullet =1 \\
0 & \bullet = 2\\
0 & else
\end{array}
\right.
\]
and
\[
H^\bullet(M,\del M) \cong \left \{
\begin{array}{lr}
0 & \bullet = 0\\
\RR^{2g+b-1} & \bullet =1\\
\RR & \bullet =2\\
0 & else
\end{array}
\right.,
\]
respectively. 
\end{lmm}

In our computation of the observables, we need to understand more than the isomorphism classes of the cohomology groups $H^\bullet(M)$ and $H^\bullet(M,\del M)$. 
We also need to understand two further pieces of information:  
the structure of the map 
\[
\delta: H^1(M,\del M) \to H^1(M)
\]
induced from the inclusion $\Omega^\bullet(M,\del M)\to \Omega^\bullet(M)$ and the structure of the Lefschetz duality pairing:
\[
\Omega: H^1(M,\del M) \otimes H^1(M) \to \RR.
\]
That is the aim of the following lemma which follows directly from Lemma 1.2 of \cite{confspacesmfldsbdy}.

\begin{lmm}
\label{lem: structureofcohom}
There are isomorphisms 
\[
H^1(M,\del M) \cong \RR^{2g}\oplus \RR^{b-1}
\]
\[
H^1(M) \cong \RR^{2g}\oplus \RR^{b-1}
\]
so that 
\begin{enumerate}
\item with respect to these decompositions, $\delta$ has the block-diagonal form
\[
\left( 
\begin{array}{cc}
T&0\\
0&0
\end{array}
\right),
\]
where $T: \RR^{2g}\to \RR^{2g}$ is an isomorphism, and
\item under these isomorphisms, $\Omega$ is the sum of a bilinear form on $\RR^{2g}$ and a bilinear form on~$\RR^{b-1}$.
\end{enumerate}
\end{lmm}

\begin{crl}
The space of bulk-boundary fields for the Poisson sigma model with boundary condition $\sL_0$ has the following cohomology:
\[
H^0(\sE_{\sL_0}(M)) \cong (\RR^{2g}\otimes \ker \Pi )\oplus(\RR^{b-1}\otimes V^\vee)\oplus V,
\]
\[
H^1(\sE_{\sL_0}(M))\cong(  \RR^{2g}\otimes \coker\, \Pi )\oplus(\RR^{b-1}\otimes V)\oplus V^\vee,
\]
where the summand $\RR^{b-1}\otimes V^\vee$ (respectively $\RR^{b-1}\otimes V$) arises from the $\RR^{b-1}$ summand in $H^1(M,\del M)$ (respectively $H^1(M)$) of the preceding Lemma. 
\end{crl}

\begin{proof}
The complex $\sE_{\sL_0}(M)$ is filtered, with $F^0(\sE_{\sL_0})=\Omega^\bullet(M) \otimes V$ and $F^1(\sE_{\sL_0}(M))=\sE_{\sL_0}(M)$. 
The $E^0$ page of the corresponding spectral sequence is $H^\bullet(M,\del M)\otimes V^\vee[1]\oplus H^\bullet(M)\otimes V$.
The differential on the $E^0$ page is 
\[
\delta\otimes \Pi : H^1(M,\del M)\otimes V^\vee\to H^1(M)\otimes V[-1].
\]
Using the splitting of Lemma \ref{lem: structureofcohom}, and choosing splittings $V\cong \coker\, \Pi \oplus V'$, $V^\vee \cong \ker \Pi \oplus (V')^\vee$,  we arrive at the conclusion of the Corollary.
\end{proof}

\begin{proof}[Proof of Lemma \ref{lem: psmconformalblocks}]
The complex $\Obq_{\sE,\sL_0}(M)\otimes_{\RR[\hbar]}\RR_{\hbar=1}$ is filtered by $\Sym$-degree, 
and the associated graded cochain complex is $\Obcl_{\sE,\sL_0}(M)$, 
whose cohomology is $\Sym(H^\bullet(\sE_{\sL_0}(M)[1]))$. 
The BV Laplacian induces on this space a differential $\Delta_{fd}$,
which arises from the $(-1)$-shifted pairing on $H^\bullet(\sE_{\sL_0}(M))$ that is induced from the corresponding pairing on the space of fields. 
This pairing respects the decompositions so that
\begin{itemize}
\item $\RR^{2g}\otimes \ker \Pi$ pairs with $\RR^{2g}\otimes \coker\, \Pi$ via the non-degenerate pairing of claim (2) of Lemma~\ref{lem: structureofcohom} and the canonical pairing between $\ker \Pi$ and $\coker \,\Pi$; 
\item $\RR^{b-1}\otimes V^\vee$ pairs with $\RR^{b-1}\otimes V$ via the duality pairing of $V$ and $V^\vee$, 
as well as the pairing of claim (2) of Lemma \ref{lem: structureofcohom}; 
\item $V$ pairs with $V^\vee$ in the canonical way. 
\end{itemize}
The Lemma then follows from direct computation of the $\Delta_{fd}$-cohomology;
in particular, see Proposition 2.4.11 of~\cite{othesis} for a proof.
\end{proof}

\subsection{Abelian CS/WZW}
\label{sec: cswzw}

As we have seen, a finite-dimensional complex vector space $\fA$ endowed with a non-degenerate symmetric pairing $\kappa$ defines an abelian Chern-Simons theory on an oriented 3-manifold $M$, with space of fields $\sE = \Omega^\bullet_{M}\otimes \fA[1]$.
We focus here on the boundary condition $\sL = \Omega^{1,\bullet}_{\Sigma}\otimes \fA$,
which encodes chiral currents, and we examine this system in three different cases of interest. 

We remark that when $\fA = \CC$, we are studying perturbative Chern--Simons theory for the abelian group ${\rm U}(1)$ and with the boundary condition given by the ${\rm U}(1)$ WZW model. 
Since we are treating Chern--Simons theory perturbatively, the integrality of the level will play no role for us. (The models we consider are isomorphic for any two nonzero values of the level.)

First, we consider the case of a compact 3-manifold $M$ with boundary and see how a Chern-Simons state of the chiral WZW system on $\del M$ arises canonically from the factorization algebra structure. 
The key observation is that the structure map for the inclusion of a tubular neighborhood $T$ of $\del M$ into $M$ induces a map 
\[
\Obq_{\sL}(\del M) \to \Obq_{\sE,\sL}(M),
\]
where the left hand side depends on the chiral currents and the right hand side ends up being, in good cases, quasi-isomorphic to ~$\CC[\hbar]$.

Second, we study the system on a manifold of the form $N\times [0,1]$, where $N$ is an oriented 2-manifold endowed with a complex structure at $t=0$ and the conjugate complex structure at $t=1$. 
Let $p: N\times [0,1]\to N$ denote the projection onto the first factor. 
We study the pushforwards $p_* \Obcl_{\sE,\sL}$ and $p_* \Obq_{\sE,\sL}$, 
which is a kind of ``slab compactification.''
In this case, we find that the pushforwards are equivalent to the factorization algebras of observables of the massless free scalar on $N$ (Lemmas \ref{lem: fullwzwscalarcl} and~\ref{lem: fullwzwscalarq}).
At the level of factorization algebras, we are recovering a ``full'' CFT by intertwining a chiral and antichiral CFT.

Finally, we study the system on a 3-manifold of the form $\Sigma \times \RR_{\geq 0}$, where $\Sigma$ is a Riemann surface.
Here, we push forward via the projection $p'$ onto $\RR_{\geq 0}$, and find that the systems are equivalent to topological mechanics on $\RR_{\geq 0}$ with values in $\Omega^\bullet(\Sigma)[1]$ and with boundary condition $\Omega^{1,\bullet}(\Sigma)$ (see Lemma~\ref{lem: cswzwonhalfline}).

\subsubsection*{Fixing the complement $L^\perp$}

The elliptic complex on $\Sigma$ perpendicular to the boundary condition $\sL$ is $\sL^\perp$, which can be identified with
\[
\sL^\perp = \Omega^{0,\bu} (\Sigma) \otimes \fA [1]
\]
equipped with the $\dbar$ differential (this is not a subcomplex of $\sEb$).
Using the obvious splitting of $\Omega^\bu(\partial M)$ into the components $\Omega^{0,\bu}(\Sigma)$ and $\Omega^{1,\bu}(\Sigma)$, we see that the differential $Q_{\partial} = \d_{dR}$ decomposes as
\[
Q_{\partial} = Q_L + Q_{L^\perp} + Q_{rel} = \dbar_{\Omega^{1,\bu}} + \dbar_{\Omega^{0,\bu}} + \partial
\]
where we view $Q_{rel} = \partial$ as the map of elliptic complexes $\partial : \Omega^{0,\bu}(\Sigma) \otimes \fA [1] \to \Omega^{1,\bu}(\Sigma) \otimes \fA$. 

\subsubsection*{The classical observables}

The sheaf $\sE_{\sL}$ of $\sL$-conditioned fields has the following explicit description.
For $U \subset M$:
\begin{align*}
\sE_{\sL} (U) & = \{\alpha \in \sE(U) \; | \; \pi(\alpha) \in \iota_* \sL(U) \} \\ & = \{\alpha \in \Omega^{\bu}(U) \otimes \fA[1] \; | \; \iota^* \alpha \in \Omega^{1, \bu} (\partial U) \otimes \fA \} .
\end{align*}
That is, the $\sL$-conditioned fields supported on $U \subset M$ consist of differential forms on $U$ whose pullback to the boundary are forms of type $(1,\bu)$. 
Likewise, we have the cosheaf $U \mapsto \sE_{\sL,c} (U)$ on $M$ which consists of compactly supported differential forms on $U$ whose pullback to the boundary are compactly supported forms of type $(1,\bu)$. 
Note that restriction here makes sense as $\iota : \partial M \hookrightarrow M$ is a closed embedding. 

The factorization algebra of classical boundary observables $\Obs^{\cl}_\sL$ on  $\Sigma$ assigns the cochain complex
\[
\Sym(\sL_c^\perp(U)) = \left(\Sym\left(\Omega_c^{0,\bu} (U) \otimes \fA [1]\right), \dbar \right) .
\]
to an open set $U \subset \Sigma$. 
Note that this is the (untwisted) enveloping factorization algebra of the cosheaf of abelian dg Lie algebras $\Omega_c^{0,\bu} \otimes \fA$ on $\Sigma$. See \S 3.6.2 of \cite{CG1}.

\subsubsection*{The quantum observables}
The factorization algebra of bulk-boundary quantum observables $\Obs^{\q}_{\sE,\sL}$ assigns to the open set $U \subset M$ the cochain complex $(\Sym(\condfieldscs[1](U))[\hbar],Q+\hbar \Delta)$.

From the general prescription in Section \ref{sec: FAconstruction} the factorization of algebra quantum boundary observables $\Obs_{\sL}^\q$ is the enveloping factorization algebra of $\sL_c^\perp[-1] = \Omega_c^{0,\bu} (U) \otimes \fA$ twisted by a local cocycle $\mu$ whose formula appears in Equation \eqref{eq: twistcocycle}.  

Since $Q_{rel}=\del$ we have the explicit formula for $\mu$: 
\[
\mu(\alpha_1,\alpha_2) = -\int_{\CC}\kappa(\alpha_1, \del \alpha_2).
\]
Explicitly, this local cocycle defines the factorization algebra on $\Sigma$ which assigns the cochain complex
\[
\left(\Sym\left(\Omega_c^{0,\bu} (U) \otimes \fA [1]\right) [\hbar] , \dbar + \hbar \mu \right) .
\]
to an open set $U \subset \Sigma$.
In other words, this is the twisted factoriazation enveloping algebra 
of the cosheaf of abelian dg Lie algebras $\Omega_c^{0,\bu}  \otimes \fA$ on $\Sigma$ associated to the local cocycle $\mu$.  
(See \S 3.6.3 of~\cite{CG1}.)

In Chapter 5 of \cite{CG1} it is shown that locally on $\Sigma = \CC$, this factorization algebra is a model for the abelian Kac-Moody vertex algebra associated to the level~$\kappa$ upon specializing $\hbar =~1$.

\subsubsection{Global sections and conformal blocks}

We now consider abelian Chern-Simons on a compact, oriented $3$-manifold $M$ coupled to chiral WZW on the boundary Riemann surface $\partial M = \Sigma$. 
In this case, $\sE_\partial(\partial M) = \Omega^\bu(\partial M) \otimes \fA$ receives two Lagrangian embeddings: 
\begin{enumerate}
\item the chiral WZW boundary condition
\[
\sL (\Sigma) = \Omega^{1,\bu}(\Sigma) \otimes \fA \hookrightarrow \sE_\partial (\partial M),
\]
\item the restriction map
\[
\pi : \sE(M) = \Omega^\bu(M) \otimes \fA[1] \to \sE_\partial (\partial M) .
\]
\end{enumerate}
When the intersection is transverse, we have $\Obcl_{\sE,\sL}(M)\simeq \CC$ and $\Obq_{\sE,\sL}(M)\simeq \CC[\hbar]$. In other words, the global sections of the factorization algebras of bulk-boundary observables are equivalent to the ground ring ($\CC$ and $\CC[\hbar]$, respectively). 

Let $T$ be a tubular neighborhood of $\del M$ in $M$. Our main theorem asserts that 
\[
\text{Obs}_{\sL}(\del M)\simeq \text{Obs}_{\sE,\sL}(T);
\]
the structure maps for the inclusion $T\subset M$ induce maps
\[
\Phi_M^{cl}:\Obcl_{\sL}(\del M)\simeq \Obcl_{\sE,\sL}(T)\to \Obcl_{\sE, \sL}(M)\simeq \CC,
\]
\[
\Phi_M^q:\Obq_{\sL}(\del M)\simeq \Obq_{\sE,\sL}(T)\to \Obcl_{\sE, \sL}(M)\simeq \CC[\hbar].
\]
We interpret $\Obcl_{\sL}(\del M)$ and $\Obq_{\sL}(\del M)$ as the spaces dual to the classical and quantum conformal blocks, respectively, of the chiral WZW model. Given a 3-manifold $M$ that cobounds $\del M$, therefore, we obtain the \emph{Chern-Simons classical (respectively quantum) conformal block} $\Phi_M^{cl}$ (respectively $\Phi_M^q$) for $M$. We can also call $\Phi_M^{cl}$ and $\Phi_M^q$ the classical and quantum Chern-Simons states for~$M$.

The idea outlined here is analogous to the discussion in Remark \ref{rmk: states}. For the case of the Poisson sigma model on an oriented, closed, connected two-manifold $N$, we were only able to define states for the boundary system when $N$ had genus 0. In the case at hand, we see that the existence of states of the sort we desire here depends on the topology of $M$ and the complex structure on $\del M$, and the way that the two interact via the inclusion $\del M\to M$. As an example, the long-exact sequence associated to the short-exact sequence
\[
0\to \condfields(M)\to \Omega^\bullet(M)[1] \to \Omega^{0,\bullet}(\del M)[1]
\]
shows that $H^1\condfields(M)$ surjects onto $H^2(M)$, so that when $H^2(M)\neq 0$, $\condfields(M)$ is not acyclic.  At least when $M$ is a handle-body, this issue does not arise, and we expect $\condfields(M)$ to be acyclic.
\subsubsection{Slab compactification}

Let $N$ be an oriented $2$-manifold.
We consider a three-manifold of the form $M= N\times [0,1]$.
Moreover, we equip $N \times \{0\}$ with a complex structure and denote it by $\Sigma$; 
we equip $N \times \{1\}$ complex conjugate complex structure and denote it by $\overline{\Sigma}$. 
Let $\iota_0$ and $\iota_1$ denote the inclusions of $N$ at $t=0$ and $t=1$, respectively. 
Let $\pi: M\to N$ be the projection onto the ``space'' slice of $M$. 
See Figure~\ref{fig:slab}.

\begin{figure}

\begin{tikzpicture}
\draw[semithick](-6,0) ellipse (0.8 and 2);
\draw[semithick](-6,0.3) arc (-30:30:1);
\draw[semithick](-6,1.2) arc (150:210:0.82);
\draw[semithick](-6,-1.3) arc (-30:30:1);
\draw[semithick](-6,-0.4) arc (150:210:0.82);
\draw[semithick](-2,0) ellipse (0.8 and 2);
\draw[semithick](-2,0.3) arc (-30:30:1);
\draw[semithick](-2,1.2) arc (150:210:0.82);
\draw[semithick](-2,-1.3) arc (-30:30:1);
\draw[semithick](-2,-0.4) arc (150:210:0.82);
\node at (-2.3,0){$\Sigma$};
\draw[semithick](2,0) ellipse (0.8 and 2);
\draw[semithick](2,0.3) arc (-30:30:1);
\draw[semithick](2,1.2) arc (150:210:0.82);
\draw[semithick](2,-1.3) arc (-30:30:1);
\draw[semithick](2,-0.4) arc (150:210:0.82);
\node at (1.7,0){$\overline{\Sigma}$};
\draw[semithick](-2,2) -- (2,2);
\draw[semithick](-2,-2) -- (2,-2);
\node at (-6.3,0){$N$};
\node at (0,-1){$\mathring{M}$};
\draw[->,semithick] (-3,0) -- (-5,0);
\node at (-4,0.25){$\pi$};
\end{tikzpicture}
\caption{Projection of $N \times [0,1]$ onto $N$}
\label{fig:slab}
\end{figure}
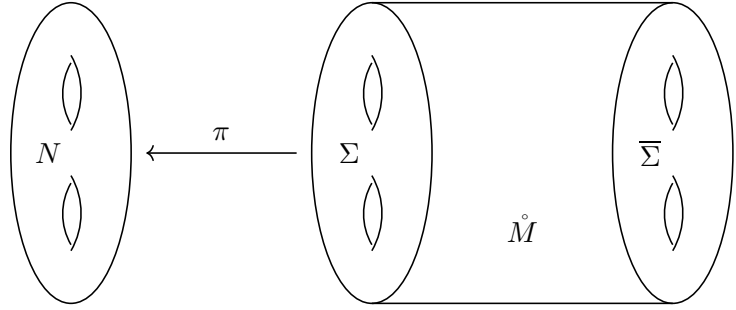

For these choices 
\[
\condfields =\left\{ \mu \in \Omega^\bullet_{N\times[0,1]}\otimes \fA[1]\,|\, \iota_0^*\mu \in \Omega^{1,\bullet}_\Sigma\otimes \fA, \iota_1^* \mu \in \Omega^{\bullet, 1}_\Sigma\otimes \fA\right\}. 
\]
is the space of $\sL$-conditioned fields for the Chern-Simons/chiral WZW bulk-boundary system.

We study now the ``slab compactification" of the factorization algebra of bulk-boundary observables.
This is the factorization algebra on $N$ obtained by pushing forward $\Obs_{\sE, \sL}$ along $\pi$. 
To decongest the notation, we assume that $\fA=\CC$, since all proofs proceed with little change for general~$\fA$.

Let $\sE_{\rm scalar}$ denote the cochain complex underlying the BV theory of the scalar field on $\Sigma$. Namely, it is the two-term chain complex
\[
\xymatrix{
\Omega^0_\Sigma \ar[r]^{\del\delbar}& \Omega^2_\Sigma
}
\]
concentrated in cohomological degrees 0 and 1, together with the natural degree --1 pairing between top forms and functions. 
Let $\Obcl_{\sE_{\rm scalar}}$ and $\Obq_{\sE_{\rm scalar}}$ denote the factorization algebras of classical and quantum obervables, respectively, for the massless free scalar.

We now show that there is a quasi-isomorphism of factorization algebras between the observables of the free scalar and the slab compactification of the bulk-boundary observables of Chern-Simons theory.
In words, this quasi-isomorphism says that the free massless 2-dimensional scalar field emerges as the theory describing a ``thin'' slab with chiral currents on one side coupled to antichiral currents on the other via a Chern-Simons theory between them.

\begin{lmm}
\label{lem: fullwzwscalarcl}
There is a quasi-isomorphism 
\[
\Obcl_{\sE_{\rm scalar}}\to \pi_*\Obcl_{\sE,\sL}
\]
of factorization algebras on $N$.
\end{lmm}

\begin{proof}
Define a map $I: \sE_{\rm scalar}\to \pi_*\sE_\sL$ by the formulas:
\begin{align*}
I(f) &= f\otimes \d t +\delbar f\otimes t -\del f\otimes (1-t)\\
I(\omega) &= \omega\otimes 1.
\end{align*}
The sheaf $\pi_*\sE_\sL$ is a subsheaf of $\Omega^\bullet_\Sigma\hotimes \Omega^\bullet_{[0,1]}([0,1])$, so we ``factorize'' forms into their tangential and normal components and write elements of $\pi_*\sE_\sL$ as tensor products.
Strictly speaking, not all forms can be written as tensor products, or even as finite sums of such. 
However, all formulas we write down have a canonical extension to the full (completed bornological) tensor product $\Omega^\bullet_\Sigma\hotimes \Omega^\bullet_{[0,1]}([0,1])$.
The map $I$ is manifestly a sheaf map, and it induces the desired quasi-isomorphism, as we proceed to show.

We construct an inverse quasi-isomorphism $P$ to $I$.
Let $f\in\Omega^0_\Sigma$, $\alpha\in \Omega^{0,1}_\Sigma$, $\beta\in \Omega^{1,0}_\Sigma$, $\omega\in \Omega^2_\Sigma$, and $\nu_1,\nu_2,\nu_3,\nu_4\in \Omega^\bullet_{[0,1]}$. 
Define the map $P: \pi_*\condfields \to \sE_{\rm scalar}$ by the formulas
\[
P(f\otimes \nu_1+\alpha \otimes \nu_2+\beta\otimes \nu_3+\omega\otimes \nu_4) =(\iota_0^* \nu_4)\omega  +f\int_{[0,1]} \nu_1-\delbar \beta \int_{[0,1]}\nu_3.
\]
Let us check that $P$ is a cochain map. Let $f\otimes \nu_1$ be a zero form on $M$ which lies in $\condfields$, i.e. it vanishes at $t=0$ and $t=1$. Then,
\[
P( (\del+\delbar)f\otimes \nu_1 +f\otimes d\nu_1) = f\otimes \nu_1(1)-f\otimes \nu_1(0)=0=Q_{\rm scalar}P(f\otimes \nu_1).
\]
Let $\alpha\otimes \nu_1+\beta\otimes \nu_2+f\otimes \nu_3$ be a one-form on $M$ which satisfies the boundary conditions to lie in $\condfields$ (here, $\nu_1,\nu_2\in \Omega^0_{[0,1]}$, and $\nu_3\in \Omega^1_{[0,1]}$; the boundary conditions are $\iota_0^*\nu_1=0$ and $\iota_1^*\nu_2=0$). Then,
\begin{align*}
P(\del\alpha \otimes \nu_1 &-\alpha\otimes d\nu_1+\delbar \beta\otimes \nu_2 -\beta\otimes d\nu_2+ (\del+\delbar)f \otimes \nu_3)\\
&= \nu_1(0)\del\alpha  + \nu_2(0)\delbar \beta + \delbar \beta (\nu_2(1)-\nu_2(0))-\delbar\del f\int_{[0,1]}\nu_3\\
&= \del\delbar f\int_{[0,1]}\nu_3 = Q\left( P(\alpha\otimes \nu_1+\beta\otimes \nu_2+f\otimes \nu_3)\right).
\end{align*}
This exhausts all the non-trivial checks that $P$ intertwines differentials.

It is immediate that $PI=\id$. 
We now construct a homotopy between $IP$ and $\id$. 
Let $\eta_0$ denote the degree --1 endomorphism of the de Rham forms $\Omega^\bullet_{[0,1]}$ which takes a one-form $\nu$ to the unique anti-derivative of $\nu$ which vanishes at $t=0$. 
Similarly, define $\eta_1$ to be the anti-derivative which vanishes at $t=1$. Now, define
\begin{align*}
K: \pi_*\condfields &\to \pi_*\condfields [-1]\\
K(f\otimes \nu_1&+\alpha\otimes\nu_2+\beta\otimes\nu_3+\omega\otimes\nu_4)
\\= &f\otimes \eta_0(\nu_1) -\left(\int_{[0,1]}\nu_1\right) f\otimes t- \alpha \otimes \eta_0(\nu_2) -\beta\otimes \eta_1(\nu_3)+ \omega \otimes \eta_0(\nu_4),
\end{align*}
One can verify by straightforward computation that $QK+KQ=IP-\id$,
which proves that $P$ and $I$ are inverse quasi-isomorphisms.

All maps involved are manifestly sheaf-theoretic over $\Sigma$, and moreover they preserve compact support. 
Hence, we also have  quasi-isomorphisms
\[
\sE_{\rm scalar,c}(U)[1]\to \condfieldscs(U\times[0,1])[1]
\]
for each $U$, and the quasi-isomorphisms respect the extension by zero maps. The lemma follows, using the usual extension of a deformation retraction between cochain complexes to a deformation retraction between the corresponding symmetric algebras.
\end{proof}

A similar lemma holds for the quantum observables.

\begin{lmm}
\label{lem: fullwzwscalarq}
There is a quasi-isomorphism
\[
\Obq_{\sE_{\rm scalar}}\to \pi_* \Obq_{\sE,\sL}
\]
of factorization algebras on~$N$.
\end{lmm}

\begin{proof}
By direct inspection, the map $I$ defined in the proof of Lemma \ref{lem: fullwzwscalarcl} respects the (--1)-shifted pairings on $\sE_{\rm scalar}$ and $\condfields$. Hence it induces also a quasi-isomorphism on the quantum observables.
\end{proof}

\begin{crl}
\label{crl: fullcswzw}
Let $\Obcl_{\chi}$ denote the boundary observables for the chiral WZW boundary condition on $\Sigma$, and similarly let $\Obcl_{\bar\chi}$ denote the boundary observables for the anti-chiral WZW boundary condition on $\overline{\Sigma}$. There is a map of factorization algebras on $N$:
\[
\Obcl_{\chi}\otimes \Obcl_{\bar \chi}\to \Obcl_{\sE_{\rm scalar}}.
\] 
There is an analogous map for the quantum factorization algebras.
\end{crl}

This map encodes the chiral and antichiral ``sectors'' of the full CFT.
When evaluated on a disk, it determines a map from a vertex algebra tensored with its conjugate into the OPE-algebra of the massless scalar field.
On a closed Riemann surface, 
the global sections of $\Obcl_\chi$ and $\Obcl_{\bar\chi}$  are (dual to) the conformal blocks of the holomorphic and anti-holomorphic Kac-Moody vertex algebras, respectively. 
This pairing of the factorization algebras gives a local-to-global description of the ``holomorphic factorization" of the conformal blocks of the full WZW theory \cite{WittenFactorization} in the case of an abelian group. 

\begin{rmk}
We now indicate how the map of Corollary \ref{crl: fullcswzw} is a quotient map, in the following sense.
Consider the factorization algebra 
\[
A=\pi_* \left( \left.\Obcl_{\sE,\sL}\right |_{N\times (0,1)}\right)
\]
on~$N$. 
Because the system under consideration is topological in the transverse direction, we may use the factorization product in the transverse direction to endow $A$ with the structure of an $E_1$-algebra in the category of factorization algebras on $N$.
Further, $\Obcl_\chi$ and $\Obcl_{\overline\chi}$ are right and left modules for $A$ (in the category of factorization algebras on $N$), respectively.
The associativity property of factorization algebras implies that the map of Corollary \ref{crl: fullcswzw} factors through the map
\[
\Obcl_\chi\otimes \Obcl_{\overline \chi}\to \Obcl_\chi\otimes_A^\LL \Obcl_{\overline \chi}.
\]
Our claim is that the resulting map
\[
\Obcl_\chi\otimes_A \Obcl_{\overline \chi}\to \Obcl_{\sE_{\mathrm{scalar}}}
\]
is an equivalence.
We make the same claim for the quantum observables.
\end{rmk}

\begin{proof}
By Theorem \ref{thm: maingenlcl}, 
\[
\Obcl_\chi \simeq \pi_*\left.\Obcl_{\sE,\sL}\right |_{N\times [0,1/2)},
\]
and
\[
\Obcl_{\bar\chi} \simeq \pi_*\left.\Obcl_{\sE,\sL}\right |_{N\times (1/2,1]}.
\]
By Lemma \ref{lem: fullwzwscalarcl}, 
\[
\Obcl_{\sE_{\rm scalar}}\simeq\pi_*\Obcl_{\sE,\sL} 
\]
The map of the present corollary is then induced from the structure maps of $\Obcl_{\sE,\sL}$ for inclusions of the form $U\times[0,1/2)\sqcup U\times (1/2,1] \subset U\times [0,1]$.
\end{proof}

\subsubsection{The other projection}

Let $M=\Sigma \times \RR_{\geq 0}$, and consider the projection $p:M\to \RR_{\geq 0}$. 
In this section, we study the pushforward factorization algebras $p_* \Obcl_{\sE,\sL}$ and~$p_* \Obq_{\sE, \sL}$,
which can be seen as studying canonical quantization of Chern-Simons theory (cf. \S4.4 of~\cite{CG1}).

Let $V=H^\bullet(\Sigma)[1]$
and endow it with the symplectic structure induced from the Poincar\'{e} duality pairing.
This graded vector space models the tangent complex at the basepoint of the $U(1)$-character stack for $\Sigma$;
its symplectic structure is also known as the Atiyah-Bott form. 
Let $L$ denote the cohomology of the holomorphic 1-forms on $\Sigma$;
it is the Lagrangian in $V$ given by the $(1,\bullet)$-part of the Dolbeault cohomology of $\Sigma$.
(A choice of K\"{a}hler metric on $\Sigma$ gives such an embedding $L\to V$.)
Conceptually picking this Lagrangian corresponds to choosing a polarization of the character stack.

By pushing forward from $M$ to $\RR_{\geq 0}$, 
we reduce the study of abelian Chern-Simons theory to the problem to the study of topological mechanics for the pair $(V,L)$,
which we treated in Section~\ref{subsec: toplmech}.

\begin{lmm}
\label{lem: cswzwonhalfline}
As factorization algebras on $\RR_{\geq 0}$, we find
\begin{itemize}
\item the classical observables $p_* \Obcl_{\sE,\sL}$ are quasi-isomorphic to $\cF_{\sO(V),\sO(L)}$, 
and
\item  the cohomology of the quantum observables $H^\bullet(p_* \Obq_{\sE,\sL})$ is isomorphic to $\cF_{W(V),F(L)}$,
which encodes the Weyl algebra associated to that tangent complex as well as the Fock space determined by the Lagrangian. 
\end{itemize}
\end{lmm}

In other words, at the classical level, our factorization algebra encodes the symplectic geometry of the $U(1$)-character stack near its base point, including the natural polarization associated with choosing a complex structure on the surface.
Our quantization recovers the canonical quantization of that data.
In short, our process recovers a shadow of the geometric quantization of abelian Chern-Simons theory.

\begin{proof}
Choose a K\"{a}hler metric on $\Sigma$. 
Let $(\sE,\sL)$ denote the Chern-Simons/chiral WZW bulk-boundary system on $\Sigma\times \RR_{\geq 0}$. 
Let $(\sF,\sK)$ denote topological mechanics on $\RR_{\geq 0}$ with values in $H^\bullet(\Sigma)[1]$ and with boundary condition $H^{1,\bullet}(\Sigma)$. 
Hodge theory using the K\"{a}hler metric allows one to construct a quasi-isomorphism
\[
\sF_{\sK,c}(U)[1]\to \condfieldscs(\Sigma\times U)[1]
\]
for any open subset $U\subset \RR_{\geq 0}$.
This quasi-isomorphism manifestly preserves the cocycles used to define the quantum observables and the extension-by-zero maps for inclusions $U\subset V$. 
It follows that  $p_* \Obcl_{\sE,\sL}$ and $p_*\Obq_{\sE,\sL}$ are equivalent to the corresponding factorization algebras for topological mechanics. 
The lemma follows via Proposition~\ref{prp: toplmech}.
\end{proof}

\subsection{Higher-dimensional abelian Chern-Simons theory}
\label{sec: higherCS}

In Example \ref{ex: highercs} we introduced a generalization of abelian Chern-Simons theory for {\em higher abelian gerbes} on oriented manifolds of dimension $4k+3$.
In physics, such theories appear in the Type IIB superstring and in the worldvolume theory of the M5 brane.
When the manifold has boundary, 
there are natural boundary conditions suggested by Hodge theory depending on a choice of a complex structure on the boundary.
For $n=0$, this boundary condition involves asking for flat connections in the bulk that become holomorphic on the boundary.
This situation has a global (or nonperturbative) refinement involving the space of holomorphic line bundles, known as the Jacobian variety of the boundary.
For $n > 0$, the global refinement of our construction here involves the {\em intermediate} Jacobian of the boundary (or, to be more accurate, a derived version thereof).
For $7$-dimensional manifolds (i.e., $k=1$), the associated boundary theory on complex $3$-folds has a close connection with holomorphic theories arising from the $M5$ brane and the $\cN = (2,0)$ superconformal theory. 
When $k=2$, there is a relationship of the theory to field strengths of the Ramond--Ramond field in Type IIB string theory.

In this section, we primarily stick to the perturbative setting, and hence work around the basepoint of the intermediate Jacobian (i.e., $\sL$ can be understood as the tangent complex at the trivial gerbe).
Our main theorems produce interesting factorization algebras on manifolds with boundary.
In particular, one could pursue analogs of the constructions we did with ordinary Chern-Simons/WZW,
and produce ``higher Chern-Simons states'' or construct a slab compactification.

\begin{rmk}
We learned of this factorization algebra associated to the intermediate Jacobian from a talk by Kevin Costello at GAP XI in Pittsburgh. 
\end{rmk}

We mostly focus here on what happens when one chooses a complex structure on the boundary which gives rise to the intermediate Jacobian.
Additionally, there is another class of boundary conditions that depends on the choice of a Riemannian metric on the boundary that we address below in Section~\ref{sec: 6d riemann}. 

\subsubsection*{The complement $L^\perp$}

We take $M$ to be a $4k+3$-manifold with boundary $\partial M$ is equipped with a complex structure.
Also, $\fA$ is a finite dimensional complex vector space equipped with a non-degenerate symmetric pairing~$\kappa$.

In the bulk $M$ we have higher dimensional abelian Chern--Simons theory whose fields are $\alpha \in \Omega^\bu_M \otimes \fA[2k+1]$ and action is
\[
S(\alpha) = \int_M \kappa(\alpha, \d \alpha) .
\]

In Example \label{ex: highercs} we introduced the intermediate Jacobian boundary condition which is defined using the complex structure on $\partial M$ by
\[
\sL = \Omega^{> k, \bullet}_{\partial M} \otimes \fA [k] . 
\]
In this case, we have 
\[
\sL^\perp= \Omega_{\partial M}^{\leq k, \bullet}\otimes\fA[2k+1]
\]
(the grading is such that forms have cohomological degree given by their form degree minus $2k+1$). The twisting cocycle $\mu$ is given by 
\[
\mu(\alpha_1,\alpha_2)=-\int_{\partial M}\kappa(\alpha_1,\del \alpha_2);
\]
it is non-zero only on $\Omega_{\partial M}^{k,\bullet}\otimes \fA[2k+1]$. 

\subsubsection*{The quantum observables}
The factorization algebra of quantum observables $\Obs^{\q}_{\sL}$ assigns to the open set $U \subset M$ the cochain complex $(\Sym(\condfieldscs[1](U))[\hbar],Q+\hbar \Delta)$.

From the general prescription in Section~\ref{sec: FAconstruction}, 
the factorization algebra of quantum boundary observables $\Obs^\q_{\sL}$ is the enveloping factorization algebra of $\sL_c^\perp [-1] = \Omega_c^{\leq k,\bu} (U) \otimes \fA[2k]$ twisted by a local cocycle $\mu$ whose general formula appears in equation~\eqref{eq: twistcocycle} is given by
\[
\mu(\alpha_1,\alpha_2) = -\int_{U}\kappa(\alpha_1, \del \alpha_2)
\]
where $\alpha_1, \alpha_2 \in \Omega^{\leq k, \bu}_c (U)$, for $U \subset \partial M$ an open set.
Explicitly, this local cocycle defines the factorization algebra on $\partial M$ which assigns the cochain complex
\begin{equation} \label{eqn:higherbo}
\left(\Sym\left(\Omega_c^{\leq k,\bu} (U) \otimes \fA [2k+1]\right) [\hbar] , \dbar + \partial + \hbar \mu \right) 
\end{equation} 
to the open subset $U\subset \partial M$
In other words, this is the twisted factoriazation enveloping algebra 
of the cosheaf of abelian dg Lie algebras $\Omega_{\partial M , c}^{\leq k,\bu} \otimes \fA[2k]$ associated to~$\mu$. 

\subsubsection{Relationship with the intermediate Jacobian}

Let $X$ denote the complex manifold $\partial M$. 
We have seen that the boundary observables of higher dimensional abelian Chern-Simons theory are obtained as the (twisted) enveloping factorization  algebra of the abelian dg Lie algebra $\fJ_X := \Omega^{\leq n, \bu}(X)[2k]$.
When $X$ is compact K\"{a}hler, this dg Lie algebra is related to the {\em intermediate Jacobian}~\cite{Griffiths1, Griffiths2, ClemensGriffiths}.

Like our theories and boundary conditions have been,
notice that the object $\fJ_X$ is sufficiently local on the complex manifold $X$, meaning it is given as the sheaf of sections of a vector bundle and its differential is a differential operator.  
Although no Lie bracket plays a role here, we will continue to refer to it as a sheaf of abelian dg Lie algebras.

For $X$ compact and K\"{a}hler, the $(k+1)$st intermediate Jacobian is defined by
\[
J_{k+1}(X) = H^{2k+1} (X, \CC) / (F^{k+1} H^{2k+1}(X, \CC) \oplus H^{2k+1}(X , \ZZ)) .
\]
where $F^{k+1} H^k(M, \CC) = \oplus_{i > k} H^{i,j-i} (X)$ is the $(k+1)$st step in the Hodge filtration.
There is a canonical isomorphism $H^{2k+1}(X, \RR) = H^{2k+1}(X, \CC) / F^{k+1} H^k(M, \CC)$, so one can also identify the intermediate Jacobian with $H^{2k+1}(X , \RR) / H^{2k+1}(X , \ZZ)$. 
The group $H^{2k+1}(X, \ZZ)$ forms a lattice inside of $H^{2k+1}(X, \CC) / F^{k+1} H^k(M, \CC)$, hence the intermediate Jacobian is a complex torus of dimension half of the $(2k+1)$st Betti number of $X$. 
When $k=0$, so $X = \Sigma$ is a Riemann surface, the intermediate Jacobian is the usual Jacobian variety. 

The tangent space of the intermediate Jacobian can be identified with the cohomology group $H^{k, k+1}(X)$. 
That is, it is precisely $H^1$ of the dg Lie algebra $\fJ_X$.
When $k=0$, the dg Lie algebra describing its infinitesimal behavior is simply $\fJ_\Sigma = \Omega^{0,\bu}(\Sigma)$, 
which describes deformations of the trivial holomorphic line bundle. 

We can regard the dg Lie algebra $\fg_X$ as a derived enhancement for the formal neighborhood of a point in $J_{k+1}(X)$ (see Section 9 of \cite{FMabeljacobi}),
so we refer to the (sheaf of) dg Lie algebra(s) $\fJ_X$ as the ``intermediate Jacobian dg Lie algebra.''
We anticipate that a global derived intermediate Jacobian exists whose tangent complex is indeed modeled by $\fJ_X$. 
We also expect that it leads to a factorization {\em space}, providing an analog of the Beilinson-Drinfeld Grassmannian, 
but the techniques needed to construct it are quite different than those we use in this paper.

\subsubsection{Relationship to physics}\label{sec:physics}

In physics, higher dimensional Chern-Simons theory has appeared in various contexts. 
Seven-dimensional Chern-Simons theory and the intermediate Jacobian on complex three-folds have appeared in the context of $M$-theory \cite{WittenM5effective}. 
Specifically, the seven-dimensional Chern--Simons theory is holographically dual to the ``chiral'' two-form that lives in the worldvolume theory of the $M5$-brane. 
Another instance arises from Ramond--Ramond fields in the Type IIB superstring in ten dimensions, 
which are described by ``chiral'' four-forms whose field strengths are required to be self-dual.

Throughout this section we focus on the {\em holomorphic twist} of the supersymmetric theories in question.
Such a holomorphic twist of a supersymmetric theory is a sector of the full theory that depends holomorphically on a complex structure placed on spacetime,
much as a topological twist (when it exists) is a sector that depends on the underlying smooth structure (see \cite{CostelloHolomorphic} for the mathematical development).
Holomorphic twists of theories are appealing mathematically as their moduli spaces admit elegant descriptions in terms of complex geometry, 
as we see here with the intermediate Jacobian, 
but our methods can also be used to study these theories before twisting.
(See Section~\ref{sec: 6d riemann} for comments on the untwisted setting.)

First, let's consider seven-dimensional Chern-Simons theory and describe how our setting is related to the physical one. 
The relevant physical theory is the abelian six-dimensional $\cN = (2,0)$ superconformal theory. 
This theory is rather peculiar as it contains among its field content a two-form with a self-dual field strength that admits no Lagrangian formulation.
In physics, such a two-form is referred to as a ``chiral'' two-form.
Physical arguments identify this six-dimensional theory as the low energy effective theory of a single $M5$-brane~\cite{WittenM5effective, Schwarz}.

The holomorphic twist of the abelian $\cN = (2,0)$ superconformal theory on $\RR^6$ has been analyzed in the BV formalism by Saberi and the third author~\cite{SW3}.
They show that the twist decomposes as the product of a free holomorphic theory and the intermediate Jacobian theory we described above. 
More precisely, they prove the following.

\begin{thm}\cite[Theorem 4.2]{SW3}
For the abelian $\cN = (2,0)$ superconformal theory on $\RR^6$ formulated in the BV formalism,
the holomorphic twist of the chiral two-form component is perturbatively equivalent to the theory of the intermediate Jacobian described by the dg Lie algebra $\fJ_{\CC^3} = \Omega^{\leq 1, \bu}(\CC^3)[1]$. 
\end{thm}

Notice that Maurer--Cartan elements in the dg Lie algebra $\fJ_{\CC^3}$ consist of pairs $\alpha \in \Omega^{0,2}(\CC^3)$ and $\beta \in \Omega^{1,1}(\CC^3)$ that satisfy $\dbar \alpha = 0$ and $\dbar \beta + \partial \alpha = 0$. 
The fields $\alpha, \beta$ are the components of the chiral two-form that survive in the holomorphic twist.
This theorem uses a description of the abelian $\cN = (2,0)$ theory in the BV formalism that was, in part, motivated by the work~\cite{ESW}.
Another description of twists of the tensor multiplet can be found in~\cite{Cederwall}.

In \cite{WittenM5effective}, an analogy is formulated between the untwisted self-dual theory on three-folds and the theory of the chiral boson in one complex dimension vis-\`{a}-vis their relationships to Chern-Simons theory.
In complex dimension one, we have witnessed this analogy at the level of factorization algebras in Section~\ref{sec: cswzw}: the factorization algebra of abelian Chern-Simons theory on three-manifolds with boundary is equivalent to the factorization algebra of the ${\rm U}(1)$-valued chiral boson on the boundary.
In complex dimension three, 
we propose a holomorphic variant of this setup with the dg Lie algebra $\fJ_X = \Omega^{\leq k, \bu}(X)[2]$ of the intermediate Jacobian sitting at the boundary of seven-dimensional Chern-Simons theory. 

Indeed, by the above theorem, the factorization algebra of classical observables for the holomorphic twist has a factor arising from the intermediate Jacobian, namely,
\[
\left(\Sym\left(\Omega_{\CC^3, c}^{\leq 1,\bu} [3]\right), \dbar + \partial \right),
\]
and this factor is precisely the $\hbar \to 0$ limit in~\eqref{eqn:higherbo} of the factorization algebra of boundary observables for seven-dimensional Chern-Simons theory (when $\fA = \CC$) with our preferred holomorphic boundary condition.
As a corollary of our work here we thus obtain a quantization of these classical observables. 

\begin{crl}
Bulk-boundary quantization provides a BV quantization of the classical observables of the abelian $\cN=(2,0)$ superconformal theory in the holomorphic twist. 
\end{crl}

Note that the free theory component of the holomorphic twist admits an easy BV quantization,
so that we have obtained a quantization of the holomorphic twist of the abelian 6d theory.
In future work we plan to study aspects of this quantization in more detail as it relates to six-dimensional superconformal field theories. 

Next, we turn to 11-dimensional Chern-Simons theory and its relationship to the Type IIB superstring. 
Costello and Li \cite{CostelloLiSUGRA} have proposed a model for the holomorphic twist of the Type IIB supergravity on a Calabi--Yau five-fold $X$, which includes fields described by the cochain complex
\[
\PV^{2,\bu}(X) \xto{\partial_\Omega} \Pi \PV^{1,\bu}(X) \xto{\partial_\Omega} \PV^{0,\bu}(X), 
\]
in its field content.
Here, $\PV^{j,\bu} (X) = \Omega^{0,\bu}(X, \wedge^i T^{1,0} X)$ denotes the Dolbeault complex of $j$-polyvector fields and $\partial_\Omega$ is the divergence operator with respect to the holomorphic volume form.
The $\Pi(-)$ denotes parity shift as this theory only makes sense in a $\ZZ/2$-graded sense, 
so we forget any $\ZZ$-gradings down to $\ZZ/2$-gradings.

Using the holomorphic volume form, one can identify this cochain complex with the cochain complex $\Omega^{\geq 3, \bu}(\CC)$. 
Within this complex is a field of Dolbeault form type $(3,2)$, which is identified with a piece of the five-form field strength of Ramond-Ramond field in the original untwisted theory. 
The factorization algebra of classical observables is
\[
\left(\Sym\left(\Pi \Omega_{X, c}^{\leq 2,\bu} \right), \dbar + \partial \right), 
\]
which is  the $\hbar \to 0$ (and $\ZZ/2$ degeneration) of the complex of boundary observables of 11-dimensional Chern-Simons theory with our preferred holomorphic boundary condition.
For a study of the BV quantization of the full interacting Type IIB theory in the holomorphic twist, we refer to~\cite{CLbcov1}. 

\subsubsection{Compactification}

In this section we show that the intermediate Jacobian is closely related to a familiar object in two-dimensional chiral conformal field theory.
Let $X$ be a complex $(2k+1)$-fold of the form $\Sigma \times \CC P^{2k}$. 

\begin{lmm}
Let $X = \Sigma \times \CC P^{2k}$, where $\Sigma$ is a Riemann surface, and let $\pi : X \to \Sigma$ be the projection. 
Then $\pi_* \fJ_X$ is equivalent to the sheaf of dg Lie algebras 
\[
\Omega^{0,\bu}_\Sigma \oplus \left( \bigoplus_{j=1}^{n-1} \underline{\CC}[2k+2j] \right) 
\]
on $\Sigma$,
where $\underline{\CC}[q]$ is the constant sheaf of $\Sigma$ with one-dimensional fiber concentrated in degree $-q$. 
\end{lmm}

\begin{proof}
By formality of projective space we have an equivalence of sheaves of dg Lie algebras on~$\Sigma$:
\[
\pi_* \fJ_X \simeq \bigoplus_{i+j \leq n} \Omega^{i,\bu}(\Sigma) \otimes H^{j,\bu}(\CC P^{2k}) [2k-2] .
\]
The only remaining differential is $\dbar_\Sigma + \partial_\Sigma$. 

For $j < k$, this complex is a direct sum of complexes of the form
\[
\begin{tikzcd}
\underline{-2k - 2j} & \underline{-2k-2j+1} \\
\Omega^{0,\bu}(\Sigma) \otimes H^{j,j}(\CC P^{2k}) \ar[dr, "\partial_\Sigma"] &  \\
& \Omega^{1,\bu}(\Sigma) \otimes H^{j,j}(\CC P^{2k})  .
\end{tikzcd}
\]
The remaining part of the complex is $\Omega^{0,\bu}(\Sigma) \otimes H^{n,n}(\CC P^{2k}) = \Omega^{0,\bu}(\Sigma)$. 
Thus, $\pi_* \fJ_X$ is quasi-isomorphic to 
\[
\Omega^{0,\bu}(\Sigma) \oplus \left( \bigoplus_{j=1}^{n-1} \underline{\CC}[2k+2j] \right) 
\]
as a sheaf of Lie algebras on~$\Sigma$.
\end{proof}

Next, we show that the central extensions are compatible. 
The boundary condition of higher dimensional Chern-Simons theory we discussed in Section \ref{sec: higherCS} gives rise to a factorization algebra of boundary observables $\Obs^{\q}_{\sL}$ on the boundary complex $(2k+1)$-fold $X$. 
For now, denote this factorization algebra by $\cF_{X, \kappa}$. 
This factorization algebra is the enveloping factorization algebra of $\fJ_X$ twisted by the local cocycle $\mu (\alpha, \beta) = \int_X \kappa(\alpha \partial \beta)$.

On the $(2k+1)$-fold $X = \Sigma \times \CC P^{2k}$, there is the following relationship between the factorization algebras $\cF_{X,\kappa}$ and $\cF_{\Sigma,\kappa}$. 
Note that $\cF_{\Sigma,\kappa}$ is the factorization algebra at the boundary of ordinary Chern-Simons theory on a $3$-manifold with boundary $\Sigma$;
that is, it is the $U(1)$ Kac-Moody factorization algebra at level~$\kappa$.

\begin{crl}
Let $\pi : \Sigma \times \CC P^{2k} \to \Sigma$.
There is a map of factorization algebras on $\Sigma$:
\[
\pi_* \cF_{X,\kappa} \to \cF_{\Sigma, {\rm vol}(\CC P^{2k}) \kappa}  .
\]
\end{crl}

What we have shown is that $\pi_* \cF_{X, \kappa} = \cF_{\Sigma, {\rm vol}(\CC P^{2k}) \kappa}  \otimes \cG$ where $\cG$ is a locally constant factorization algebra on $\Sigma$, which is independent of $\kappa$ and the volume of~$\CC P^n$. 

Let's briefly put these observations in the context of this paper.
Let $\RR_{\geq 0} \times \Sigma \times \CC P^{2k}$ be a $4k+3$-dimensional manifold with boundary $X$,
and equip it with the higher abelian CS/WZW system we've just described.
We can compactify this whole system along $\CC P^{2k}$ to get a bulk-boundary system on $\RR_{\geq 0} \times \Sigma$.
We have just seen that the boundary observables look like a chiral current algebra tensored with a locally constant factorization algebra that depends on the topology of $\CC P^{2k}$.
In more conventional terminology, it's a chiral CFT coupled trivially to a 2d TFT.
The bulk observables behave similarly.
For the higher dimensional Chern-Simons theory on the bulk $\RR_{> 0} \times \Sigma \times \CC P^{2k}$,
the factorization algebra of bulk observables pushes forward to $\RR_{> 0} \times \Sigma$.
There, it looks like the observables of a 3-dimensional abelian Chern-Simons theory with values in the graded abelian Lie algebra~$H^*(\CC P^{2k})[2k+1]$.

\subsubsection{A Riemannian variation}
\label{sec: 6d riemann}

We briefly discuss the Riemannian boundary condition of higher dimensional Chern--Simons that depends on a Riemannian metric of Example~\ref{ex: Riemhighercs}.
This Riemannian version of the boundary condition can be used to treat the examples in Section~\ref{sec:physics} {\em before} performing the holomorphic twist, i.e., with the original supersymmetric theory.

Recall that $\Omega^\bu_{\partial M} \otimes \fA [2k+1]$ has a subcomplex
\[
\sL = \bigg(\Omega^{2k+1}_+(N) \otimes \fA \xrightarrow{\d} \Omega^{2k+2} (N) \otimes \fA [-1] \xrightarrow{\d} \cdots \xrightarrow{\d} \Omega^{4k+2}(N) \otimes \fA [-2k-1]\bigg) 
\]
by using the decomposition of the middle de Rham forms under the Hodge $\star$ operator.
The elliptic complex $\sL^\perp$ on $N$ ``perpendicular'' to the boundary condition $\sL$ can be identified with
\begin{equation}\label{eqn:dminus}
\sL^\perp = \bigg( \Omega^{0} (N) \otimes \fA [2k+1].\xto{\d} \Omega^1(N) \otimes \fA[2k] \to \cdots \to \Omega^{2k} (N) \otimes \fA [1] \xto{\d_-} \Omega^{2k+1}_- (N) \otimes \fA \bigg)
\end{equation}
where $\d_- : \Omega^{2k} (N) \to \Omega^{2k+1}_- (N)$ denotes the de Rham differential followed by the projection using the decomposition~(\ref{eqn:decomp}). 
In turn, we can read off the factorization algebra of classical boundary observables $\Obs^\cl_{\sL}$ which assigns to the open set $U \subset N$, 
the cochain complex~$\Sym(\sL_c^\perp(U))$. 

The factorization algebra of quantum boundary observables $\Obs^\q_{\sL}$ is the enveloping factorization algebra of $\sL_c^\perp [-1]$ twisted by a local cocycle $\mu$ whose formula appears in equation~\eqref{eq: twistcocycle}. 
A similar calculation as in the complex case reveals an explicit formula for $\mu$: 
\[
\mu(\alpha_1,\alpha_2) =- \int_{U}\kappa(\alpha_1, \d \alpha_2),
\]
where $\alpha_1, \alpha_2 \in \sL^\perp_c (U)$ for $U \subset N$ an open set.
This local cocycle defines explicitly the factorization algebra of quantum boundary observables on $N$ that assigns the cochain complex
\[
\left(\Sym\left(\sL_c^\perp (U) \right) [\hbar] , \d + \d_- + \hbar \mu \right) 
\]
where $\d + \d_-$ denotes the differential in Equation~(\ref{eqn:dminus}). 

\appendix

\section{A lemma of Atiyah-Bott type}

For an elliptic complex on a manifold without boundary, 
the complex of compactly supported smooth sections embeds into the complex of compactly supported distributional sections.
The {\it Atiyah-Bott lemma} is that this embedding is a continuous quasi-isomorphism 
(see Appendix D of \cite{CG1}),
and it plays a role in constructing the observables of free BV theories.
We wish to prove an analog relevant to free bulk-boundary theories.

Let $(\sE,\sL)$ be a free bulk-boundary system. 
We use the pairing $\ip$ to view $\condfieldscs[1]$ as a space of linear functionals on $\condfields$: 
each section $e_1\in \condfieldscs[1]$ gives a linear functional $\Phi_{e_1}$ by the formula
\[
\Phi_{e_1}(e_2)=\ip[e_1,e_2].
\]
This embedding has the following property.

\begin{prp}
The map $\Phi_\cdot$ induces a quasi-isomorphism of complexes of cosheaves
\[
\condfieldscs[1]\to \condfields^\vee,
\]
where $\condfields^\vee$ is the cosheaf which assigns to the open $U$, the strong topological dual to~$\condfields(U)$.
More precisely, on each open $U$, this map is a continuous linear map of topological vector spaces and a quasi-isomorphism.
\end{prp}

\begin{proof}
The map is continuous because it is the composite 
\[
\condfieldscs[1]\hookrightarrow \sE_c[1]\to \sE^\vee \to \condfields^\vee.
\]
The map preserves the differential $Q$ because 
\[
\Phi_{Q e_1}(e_2)=\ip[Qe_1,e_2]=\pm \ip[e_1,Qe_2]=\Phi_{e_1}(Qe_2);
\]
this is only true because we have imposed the boundary condition $\sL$. It manifestly respects the extension maps of cosheaves. It remains only to check that it is a quasi-isomorphism. In the proof of Theorem \ref{thm: maingenlcl}, we show that $\condfieldscs[1]$ is a homotopy cosheaf; an almost identical argument shows that $\condfields^\vee$ is also a homotopy cosheaf. Hence, given any open $U\subset \del M$, and any (locally finite) cover $\fU$ of $U$, we have the following commutative diagram
\[
\begin{tikzcd}
\check{C}(\condfieldscs[1],\fU)\ar[r,"\sim"]\ar[d,]&\condfieldscs(U)[1]\ar[d]\\
\check{C}(\condfields^\vee,\fU)\ar[r,"\sim"]&\condfields^\vee(U)
\end{tikzcd}.
\]
We will show that the left-hand downward pointing map is a quasi-isomorphism. 

Fix a tubular neighborhood $N\cong \del  M\times [0,T)$ of $\del M$. 
Let us assume that the cover $\fU$ is ``somewhat nice:'' 
it consists of open subsets $U_\alpha$ such that either $\overline{U}_\alpha\cap \del M=\emptyset$ or $V_\alpha\subset N$ of the form $V_\alpha\cong V'_\alpha\times [0,T')$ where $V'_\alpha$ is an open set in $\del M$. 
All finite intersections of somewhat nice sets are also somewhat nice, 
so all the summands in the \v{C}ech complexes will be of the form $\condfieldscs[1](U')$ or $\condfields^\vee(U')$ for $U'$ somewhat nice. 
If we prove that the map $\condfieldscs[1](U')\to \condfields^\vee(U')$ is a quasi-isomorphism for $U'$ somewhat nice, 
then the proposition follows, since the \v{C}ech complex has a filtration by degree of intersection 
(which is preserved by the map $\check{C}(\condfieldscs[1],\fU)\to \check{C}(\condfields^\vee,\fU)$) 
and the induced map on the associated graded spaces is a sum of maps $\condfieldscs[1](U')\to \condfields^\vee(U')$ for $U'$ somewhat nice. 

If $\overline{U'} \cap \del M=\emptyset$, then the map $\condfieldscs[1](U')\to \condfields^\vee(U')$ is a quasi-isomorphism, by the Atiyah-Bott lemma (see Appendix D of \cite{CG1}). Otherwise, suppose $U'=V\times [0,T')$, and let $L':=\Eb/L$. Denote by $\sL^\perp$ the sheaf of sections of $L'$. We saw in the proof of Theorem \ref{thm: maingenlcl} that there is a deformation retraction of $\condfieldscs[1](U')$ onto $\sL^\perp(V)$. Similarly, there is a deformation retraction of $\condfields(U')$ onto $\sL(V)$, and hence of $\sL^\vee(V)$ onto $\condfields^\vee(U')$. 
The map $\condfields^\vee(U')\to \sL^\vee(V)$ in this deformation retraction is dual to the inclusion $\sL(V)\to \condfields(U')$ of the $\sL$ fields as constants in the normal direction. From the characterization of the map $\sL^\perp_c(V)\to \condfieldscs[1](U')$ in Theorem \ref{thm: maingenlcl}, it follows that the composite 
\[
\sL^\perp_c(V)\to \condfieldscs[1](U')\to \condfields^\vee(U')\to \sL^\vee(V)
\]
is the Atiyah-Bott quasi-isomorphism (using the pairing $\ip_\partial$ to identify $L'$ with $L^!$). It follows that the map $\condfieldscs(U')\to \condfields^\vee(U')$ is a quasi-isomorphism, whence the proposition. 
\end{proof}

\section{Topological tensor products in the presence of boundary conditions}

How to find the correct ``natural'' tensor product of topological vector spaces is a notoriously subtle question.
In some situations there are options that are appealing for several reasons.
For instance, given two vector bundles $V_1\to M_1$ and $V_2\to M_2$, 
let $\sV_1$ and $\sV_2$ denote the locally convex topological vector spaces consisting of the smooth global sections of $V_1$ and $V_2$, respectively. 
There is a standard isomorphism (of topological vector spaces)
\[
\sV_{1}\hotimes_\pi \sV_2 \cong \cinfty(M_1\times M_2, V_1\boxtimes V_2),
\]
where $V_1\boxtimes V_2$ is the external tensor product of the bundles $V_1$ and $V_2$
and $\hotimes_\pi$ is the completed projective tensor product of locally convex topological vector spaces.
In this case the geometrically attractive answer matches a completion that is natural from functional analysis.
We use that fact---and a compactly supported analog---in defining the observables of a free BV theory on a manifold without boundary (see Section~\ref{sec: bulk obs}). 
Its main technical role is in the proof that the observables form a factorization algebra.

When we work with free bulk-boundary theories, we would like a similar geometric understanding of the completed bornological tensor product $\hotimes_\beta$. 
From the point of view of the paper, 
this appendix is devoted to proving that $(\condfieldscs[1](U))^{\hotimes_\beta k}$
is isomorphic to the space of compactly-supported sections of $E^{\boxtimes k}$ over $U^{\times k}$ whose $j$th tensor factor lies in $L\oplus \Eb \,\d t$ when the corresponding $M$ coordinate lies on~$\del M$. 
But it is natural to treat several generalizations and variants of this fact. 

\textit{The discussion here is highly technical, and its main technical role is in the proof that the bulk-boundary observables form a factorization algebra. 
This section is not needed unless the reader wants a detailed understanding of the vector spaces appearing in the bulk-boundary observables.}

To state these generalizations, let $M_1,\cdots, M_k$ be manifolds with boundary, $V_1\to M_1, \cdots, V_k\to M_k$ be vector bundles on the $M_i$, and $W_1\subset V_1\big|_{\partial M_1},\cdots, W_k\subset V_k\big|_{\partial M_k}$ be subbundles of the indicated bundles. Breaking slightly with our usual notation, we will let $\sV_i:= \cinfty(M, V_i)$ and $\sW_i:= \cinfty(\del M, W_i)$,
i.e. we use the script letters to denote the spaces of global sections of vector bundles instead of the corresponding sheaves of sections.
\begin{ntn}
Define 
\[
(\sV_i)_{W_i}:= \{ \sigma \in \sV_i \mid \sigma|_{\del M_i} \in \cinfty (M_i,W_i)\}.
\]
The space $(\sV_i)_{W_i}$ is a closed subspace of $\sV_i$; since the latter space is nuclear Fr\'{e}chet, the former is as well. More categorically, $(\sV_i)_{W_i}$ is the pullback
\[
\begin{tikzcd}
(\sV_i)_{W_i} \ar[r]\ar[d]& \sV_i\ar[d]\\
\sW_i \ar[r]& \cinfty(\del M, V_i\mid_{\del M})
\end{tikzcd}
\]
\end{ntn}

\begin{ntn}
Define
\[
\tp:= \cinfty(M_1\times\cdots\times M_k, V_1\boxtimes \cdots \boxtimes V_k),
\]
and
\begin{align*}
\tpbc&:=\\
\big \{ \sigma \in &\tp \mid \sigma (x_1,\cdots, x_k)\in (V_1)_{x_1}\otimes \cdots \otimes (W_i)_{x_i}\otimes \cdots \otimes (V_k)_{x_k} \text{ when } x_i\in \partial M_i \big\};
\end{align*}
in other words, $\tpbc$ consists of sections of $V_1\boxtimes \cdots \boxtimes V_k$ whose $i$-th tensor factor belongs to $W_i$ whenever the corresponding coordinate lies in $\del M_i$. We endow $\tpbc$ with the topology which it inherits as a subspace of $\tp$. The resulting locally convex topological vector space $\tpbc$ is nuclear Fr\'{e}chet, since it is a closed subspace of $\tp$. $\tpbc$ can be described as a limit in the category of topological vector spaces. More precisely, it is the simultaneous limit of all the diagrams of the form 
\[
\begin{tikzcd}
&\cinfty(M_1\times \cdots \times\del M_i\times\cdots \times M_k, V_1\boxtimes\cdots \boxtimes W_i\boxtimes \cdots \boxtimes V_k)\ar[d]\\
\tp\ar[r]& \cinfty(M_1\times \cdots \times \del M_i \times \cdots\times M_k, V_1\boxtimes \cdots\boxtimes (V_i)\mid_{\del M_i}\boxtimes \cdots \boxtimes V_k).
\end{tikzcd}
\]
as $i$ ranges from 1 to $k$. 
\end{ntn}

Note that the continuous multilinear map 
\[
\sV_1\times\cdots\times \sV_k \to \tp,
\]
when restricted to $(\sV_1)_{W_1}\times \cdots \times (\sV_k)_{W_k}$, has image in $\tpbc$, so there is a natural map 
\[
\fS : (\sV_1)_{W_1}\widehat{\otimes}_\pi \cdots \widehat{\otimes}_\pi (\sV_k)_{W_k}\to \tpbc.
\]

We can establish similar notations when we require compact support for sections of the $V_i$. Let us choose compact subsets $\cK_i\subset M_i$. We choose to use a calligraphic font for the $\cK_i$ because the symbols $\cK_i$ and $W_i$ will both appear in subscripts in our notation, and we want to make clear that the two subscripts serve different purposes. 

\begin{ntn}
Let 
\begin{enumerate}
\item  $(\sV_i)_{\cK_i}$ denote the space of sections of $V_i$ with compact support on $\cK_i$;
\item $(\sV_i)_{\cK_i,W_i}$ denote the space
\[
(\sV_i)_{\cK_i}\cap (\sV_i)_{W_i},
\]
i.e. $(\sV_i)_{\cK_i,W_i}$ is the space of sections of $V_i$ satisfying both a boundary condition and a compact support condition;
\item $\tpcs$ denote the subspace of $\tp$ consisting of sections with compact support on $\cK_1\times \cdots \times \cK_k$; and
\item $\tpbccs$ denote the space
\[
\tpcs \cap \tpbc.
\]
As with $(\sV_i)_{\cK_i,W_i}$, the sections in $\tpbccs$ satisfy both a boundary condition and a compact support condition.
\end{enumerate}
All four spaces are nuclear Fr\'{e}chet spaces.
\end{ntn}

There is a map 
\[
\fS_{c.s.} : (\sV_1)_{W_1,\cK_1}\widehat{\otimes}_\pi \cdots \widehat{\otimes}_\pi (\sV_k)_{W_k,\cK_k}\to \tpbccs.
\]

The aim of this appendix is to prove the following result.

\begin{thm}
\label{thm: tensorofdirichlet}
The maps $\fS$ and $\fS_{c.s.}$ are isomorphisms for the topological vector space structures.
\end{thm}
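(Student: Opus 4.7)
The plan is to reduce Theorem~\ref{thm: tensorofdirichlet} to two classical ingredients: the Grothendieck identification
\[
\sV_1 \hotimes_\pi \cdots \hotimes_\pi \sV_k \;\cong\; \tp,
\]
which follows from the nuclearity of $\sV_i = \cinfty(M_i,V_i)$, together with the exactness of $-\hotimes_\pi F$ on short exact sequences of nuclear Fr\'echet spaces (for $F$ any nuclear Fr\'echet space). Once these are in hand, both statements follow from expressing $(\sV_i)_{W_i}$ as a kernel, iterating, and applying the open mapping theorem to upgrade the resulting continuous bijection between Fr\'echet spaces to a topological isomorphism.

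First I would realize the boundary condition as a kernel. Let $r_i: \sV_i \to \cinfty(\del M_i, V_i|_{\del M_i})$ be the restriction to the boundary; by a smooth collar of $\del M_i$ in $M_i$ (equivalently, a Seeley extension), $r_i$ is a continuous surjection. Since $W_i$ is a subbundle of $V_i|_{\del M_i}$, the subspace $\sW_i$ is closed, and the Hausdorff quotient is canonically identified with $K_i := \cinfty(\del M_i, V_i|_{\del M_i}/W_i)$. Composing $r_i$ with the quotient map gives a short exact sequence of nuclear Fr\'echet spaces
\[
0 \to (\sV_i)_{W_i} \to \sV_i \to K_i \to 0.
\]

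Next I would iterate the exactness of $-\hotimes_\pi F$, one factor at a time. Tensoring the $i$-th sequence with the remaining $\sV_j$'s (using associativity of $\hotimes_\pi$) identifies, for each $i$,
\[
\sV_1 \hotimes_\pi \cdots \hotimes_\pi (\sV_i)_{W_i} \hotimes_\pi \cdots \hotimes_\pi \sV_k
\]
with the kernel of the continuous surjection from $\sV_1 \hotimes_\pi \cdots \hotimes_\pi \sV_k$ obtained by replacing the $i$-th factor by $K_i$. Intersecting these $k$ kernels yields $(\sV_1)_{W_1} \hotimes_\pi \cdots \hotimes_\pi (\sV_k)_{W_k}$, via a second round of the same exactness. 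Transporting along Grothendieck's identification, each of the $k$ surjections becomes ``restrict the $i$-th coordinate to $\del M_i$ and then pass to the pointwise quotient by $W_i$,'' whose kernel is precisely the subspace of $\tp$ whose $i$-th tensor component lies in $W_i$ when $x_i \in \del M_i$. Intersecting over $i$ recovers exactly $\tpbc$. Since both sides are Fr\'echet and $\fS$ is a continuous bijection, the open mapping theorem promotes $\fS$ to a topological isomorphism. For $\fS_{c.s.}$, I would observe that $(\sV_i)_{\cK_i}$ is a closed, hence nuclear Fr\'echet, subspace of $\sV_i$, and rerun the argument with $(\sV_i)_{\cK_i}$ in place of $\sV_i$ throughout; the restriction map remains surjective via a Seeley extension chosen to preserve compact supports near $\cK_i\cap\del M_i$.

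The main obstacle is organizing the iterated exactness cleanly: one must check that the intermediate quotients $K_i \hotimes_\pi \bigl(\bigotimes_{j\neq i}\sV_j\bigr)$, and the further quotients produced when iterating, all remain Hausdorff nuclear Fr\'echet, and that kernels of successive continuous surjections of nuclear Fr\'echet spaces are genuinely computed by intersection in the completed tensor product. A secondary technical point, relevant only for $\fS_{c.s.}$, is the existence of a support-preserving Seeley extension realizing surjectivity of the restriction in the compactly supported setting.
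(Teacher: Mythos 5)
Your proposal is correct and follows essentially the same route as the paper: both arguments rest on the Grothendieck identification $\sV_1\hotimes_\pi\cdots\hotimes_\pi\sV_k\cong\tp$ together with the compatibility of $\hotimes_\pi$ with the finite-limit presentation of the constrained subspaces, which the paper phrases as ``the completed projective tensor product commutes with limits in each variable'' applied to the defining pullback squares, and which you phrase equivalently as exactness of $-\hotimes_\pi F$ on short exact sequences of nuclear Fr\'echet spaces. The only cosmetic differences are that your kernel formulation requires surjectivity of the boundary restriction (supplied by a collar/Seeley extension, with the minor support-preservation care you already flag for $\fS_{c.s.}$) and a final appeal to the open mapping theorem, neither of which the paper's pullback phrasing makes explicit.
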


\begin{proof}
The completed projective tensor product commutes with limits separately in each variable. Hence, 
\[
(\sV_1)_{W_1}\widehat{\otimes}_\pi \cdots \widehat{\otimes}_\pi (\sV_k)_{W_k}
\]
can be identified with the simultaneous limit of diagrams of the form 
\[
\begin{tikzcd}
&\sV_1\hotimes_\pi \cdots \hotimes_\pi \sW_i \hotimes_\pi \cdots \hotimes_\pi\sV_k  \ar[d]\\
\sV_1\hotimes_\pi \cdots\hotimes_\pi \sV_k\ar[r]& \sV_1\hotimes_{\pi}\cdots\hotimes_\pi \cinfty(\del M, V_i\mid_{\del M}) \hotimes_\pi \cdots \hotimes_\pi \sV_k.
\end{tikzcd}
\]
as $i$ ranges from $1$ to $k$. As we have seen, $\tpbc$ is a similar limit. The isomorphism $\sV_1 \hotimes_\pi \cdots \hotimes_\pi \sV_k \to \tp$ and its analogs for the other entries of the diagrams induces an isomorphism between the diagram defining $\sV_1\hotimes_\pi \cdots \hotimes_\pi \sW_i \hotimes_\pi \cdots \hotimes_\pi\sV_k$ and the one defining $\tpbc$. $\fS$ is induced from this isomorphism of diagrams, so is an isomorphism. The same exact argument applies for~$\fS_{c.s}$.
\end{proof}

We now describe a consequence of Theorem~\ref{thm: tensorofdirichlet} that is of more direct relevance to the present context. Let us momentarily suppress the $i$ subscripts from our notation, letting $V\to M$ be a vector bundle and $W$ a subbundle of $V\mid_{\del M}$. We define $(\sV)_{W,c}$ to be the space
\[
\text{colim}\left(  (\sV)_{W,\cK_{1}}\to (\sV)_{W,\cK_2}\to\cdots\right),
\]
with $\cK_{j}\subset \cK_{(j+1)}$ and $\cup_{j} \cK_{j} = M$, i.e. the $\cK_j$ form a sequence of compact subsets of $M$ exhausting it. Equivalently, we can define $(\sV)_{W,c}$ via the pullback diagram
\[
\begin{tikzcd}
(\sV)_{W,c}\arrow[r]\arrow[d] \arrow[rd, phantom, "\lrcorner", at start] & (\sV)_c\arrow[d]\\
(\sV)_{W}\arrow[r,hook] & \sV
\end{tikzcd};
\]
here $\sV_c$ is the space of compactly-supported sections of $V$ endowed with the inductive limit topology (when $\sV_c$ is endowed with this topology, the arrow on the right-hand side of the above diagram is not an embedding). The completed projective tensor product does not commute with colimits; hence Theorem \ref{thm: tensorofdirichlet} does not help us to compute completed projective tensor products of spaces of the form $(\sV)_{W,c}$. We may, however, forget the topology of all spaces involved, remembering only the bounded subsets. In other words, we remember only the underlying bornological vector spaces. Once we do, a new tensor product becomes available to us, namely the completed \emph{bornological} tensor product. The completed bornological tensor product \emph{does} commute with colimits. For nuclear Fr\'{e}chet spaces, it coincides with the completed projective tensor product. See \S B.4-5 of \cite{CG1} for details. 

In the main body of the text, we always use the completed bornological tensor product. Hence, we need to use Theorem \ref{thm: tensorofdirichlet} to infer statements about the bornological tensor products of interest to us. This task is undertaken in the following corollary:

\begin{crl}
\label{crl: borntensorofdirichlet}
There are isomorphisms of bornological vector spaces
\[
(\sV_1)_{W_1}\,\hotimes_\beta\, \cdots \,\hotimes_\beta\,(\sV_k)_{W_k}\cong \tpbc,
\]
\[
(\sV_1)_{W_1,c}\,\hotimes_\beta\,\cdots\, \widehat{\otimes}_\beta\, (\sV_k)_{W_k,c}\cong \tpbccslf
\]
Here, $\tpbccslf$ is defined analogously to~$(\sV_i)_{W_i,c}$.
\end{crl}

\begin{proof}[Proof of Corollary]
The isomorphism 
\[
(\sV_1)_{W_1}\,\hotimes_\beta\, \cdots \,\hotimes_\beta\,(\sV_k)_{W_k}\cong \tpbc
\]
is a direct consequence of Theorem \ref{thm: tensorofdirichlet}, since the $(\sV_i)_{W_i}$ are nuclear Fr\'{e}chet spaces and the completed bornological tensor product coincides with the completed projective tensor product of such spaces, by Corollary 7.1.2 of~\cite{CG1}.

For the second isomorphism, the same argument as for $\tpbc$ gives that 
\[
(\sV_1)_{W_1,\cK_1}\,\hotimes_\beta\, \cdots \,\hotimes_\beta\, (\sV_k)_{W_k,\cK_k}\cong \tpbccs;
\]
since the completed bornological tensor product commutes with colimits, the isomorphism 
\[
(\sV_1)_{W_1,c}\,\hotimes_\beta\, \cdots \,\hotimes_\beta\, (\sV_k)_{W_k,c}\cong \tpbccslf
\]
follows.
\end{proof}

\bibliography{references}
\bibliographystyle{alpha}
\end{document}